\title[Embeddings of $\alpha$-modulation spaces]{Embeddings of $\alpha$-modulation spaces}
\author[J. Toft]{Joachim Toft}
\address{School of Computer Science, Physics and Mathematics, Linn\ae us University, SE--351 95 V\" axj\" o, Sweden.}
\email{joachim.toft@lnu.se}
\author[P. Wahlberg]{Patrik Wahlberg}
\address{Department of Mathematics, University of Turin, Via Carlo Alberto 10, 10123 Torino (TO), Italy.}
\email{patrik.wahlberg@unito.it}
\keywords{$\alpha$-modulation spaces, embeddings, sharpness}
\numberwithin{equation}{section}          
\newtheorem{thm}{Theorem}
\numberwithin{thm}{section}
\newcommand{\rubrik}{}
\newtheorem{prop}[thm]{Proposition}
\newtheorem{cor}[thm]{Corollary}
\newtheorem{lem}[thm]{Lemma}
\theoremstyle{definition}
\newtheorem{defn}[thm]{Definition}
\theoremstyle{remark}
\newtheorem{rem}[thm]{Remark}              
\newcommand{\ro}{\mathbb R}
\newcommand{\noo}{\mathbb N_0}
\newcommand{\rr}[1]{\mathbb R^{#1}}
\newcommand{\non}[1]{\mathbb N_0^{#1}}
\newcommand{\zz}[1]{\mathbb Z^{#1}}
\newcommand{\nm}[2]{\Vert #1\Vert _{#2}}
\newcommand{\sets}[2]{\{ \, #1\, ;\, #2\, \} }
\newcommand{\ep}{\varepsilon}
\newcommand{\fy}{\varphi}
\newcommand{\supp}{\operatorname{supp}}
\newcommand{\eabs}[1]{\langle #1\rangle}
\newcommand{\vrum}{\vspace{0.1cm}}
\newcommand{\wt}{\widetilde}
\newcommand{\wh}{\widehat}
\begin{document}

\begin{abstract}
We show upper and lower embeddings of $\alpha_1$-modulation spaces in $\alpha_2$-modulation spaces
for $0 \leq \alpha_1 \leq \alpha_2 \leq 1$, and prove partial results on the sharpness of the embeddings.
\end{abstract}

\maketitle

\emph{Dedicated to Professor Petar Popivanov on the occasion of his 65th birthday}

\section{Introduction}

Let $1 \leq p,q \leq \infty$ and define the indices
\begin{equation}\nonumber
\begin{aligned}
\theta_1(p,q) & = \max \left( 0,q^{-1} - \min( p^{-1},p'^{-1})  \right), \\
\theta_2(p,q) & = \min \left( 0,q^{-1} - \max( p^{-1},p'^{-1})  \right).
\end{aligned}
\end{equation}
Our main result is the following.
For $0 \leq \alpha_1 \leq \alpha_2 \leq 1$, $p,q \in [1,\infty]$ and $s \in \ro$,
we have the embeddings for $\alpha$-modulation spaces
\begin{equation}\label{huvudresultat}
M_{\alpha_2,s + d(\alpha_2-\alpha_1)\theta_1(p,q) }^{p,q}(\rr d)
\subseteq M_{\alpha_1,s}^{p,q} (\rr d)
\subseteq M_{\alpha_2,s + d (\alpha_2-\alpha_1) \theta_2(p,q)}^{p,q}(\rr d).
\end{equation}
(See Theorem \ref{alphaembedding}.)
The embeddings \eqref{huvudresultat} contain known results for embeddings of modulation spaces in Besov spaces \cite{Toft2} and sharpen Gr\"obner's embeddings \cite{Grobner1}.

We also show the sharpness of the embeddings \eqref{huvudresultat} in the following sense.
(See Corollary \ref{sharpnessresult}.)
If $p \geq \min(2,q)$ then
\begin{equation}\label{sharpness1}
M^{p,q}_{\alpha _1,s} \subseteq M^{p,q}_{\alpha _2,t} \quad \Longrightarrow \quad t \leq s + d(\alpha_2-\alpha_1) \theta_2(p,q).
\end{equation}
If $p \leq \max(2,q)$ then
\begin{equation}\label{sharpness2}
M^{p,q}_{\alpha _2,t} \subseteq M^{p,q}_{\alpha _1,s} \quad \Longrightarrow \quad t \geq s + d(\alpha_2-\alpha_1) \theta_1(p,q).
\end{equation}
For $p < \min(2,q)$ we are unable to show the implication \eqref{sharpness1}.
Nevertheless, we conjecture that the implication \eqref{sharpness1} holds also for $p < \min(2,q)$.
By duality, this is equivalent to \eqref{sharpness2} for $p > \max(2,q)$.

\begin{rem}
After finalizing the proof of \eqref{huvudresultat}, we noticed the preprint \cite{Hanwang1} by Han and Wang.
Their results \cite[Theorems 5.1 and 5.2]{Hanwang1} generalize our Theorem \ref{alphaembedding}, and show that
the embeddings \eqref{huvudresultat} hold for all $p,q \in (0,\infty]$, $0 \leq \alpha_1 \leq \alpha_2 \leq 1$ and $s \in \ro$.
This paper provides an alternative proof to Han and Wang's proof in the case $p,q \in [1,\infty]$, and establishes the partial sharpness of the embeddings (sharpness results are not treated in \cite{Hanwang1}).
\footnote{Note added in proof. In an updated version of their manuscript [10], Han and Wang establish the sharpness of the embeddings in all cases.}
\end{rem}

\section{Preliminaries}

$\noo$ denotes the nonnegative integers.
Inclusions $A \subseteq B$ and equalities $A=B$ of topological spaces $A$, $B$,
are understood as embeddings, that is an inclusion is continuous.
We use the standard notations $\mathscr S(\rr d)$, $\mathscr S'(\rr d)$, $C_c^\infty(\rr d)$ for function and distribution spaces (see e.g. \cite{Horm1}).
The Fourier transform of $f \in \mathscr S(\rr d)$ is defined by
$$
\mathscr F f(\xi) = \wh f(\xi) = (2 \pi)^{-d/2} \int_{\rr d} f(x) e^{- i x \cdot \xi} dx.
$$
A Fourier multiplier operator is defined by $\varphi(D)f = \mathscr F^{-1} (\varphi \wh f)$, provided $\varphi$ and $f$ are objects such that the expression makes sense.
For $s \in \ro$ the Sobolev space $H_s(\rr d)$ is defined as the subspace of $f \in \mathscr S'(\rr d)$ such that $\wh f \in L_{\rm loc}^2(\rr d)$ and
$$
\| f \|_{H_s} = \left( \int_{\rr d} \eabs{\xi}^{2s} | \wh f(\xi)|^2 d \xi  \right)^{1/2} < \infty
$$
where $\eabs{\xi}=(1+|\xi|^2)^{1/2}$.

We denote by $|A|$ the cardinality of a finite set $A$, and by $\mu(A)$ the Lebesgue measure of a measurable set $A \subseteq \rr d$.
A closed ball in $\rr d$ of center $a \in \rr d$ and radius $r \geq 0$ is denoted $B(a,r) = \{ x \in \rr d: |x-a| \leq r \}$.
A closed cube in $\rr d$ of center $c$ and side length $2 r$ is denoted $Q(c,r) = \{ x \in \rr d: \max_{1 \leq j \leq d} |x_j-c_j| \leq r \}$.
The conjugate exponent to $p \in [1,\infty]$ is denoted $p'$ and defined by $1/p+1/p'=1$.
The notation $X \lesssim Y$ means that $X \leq C Y$ for some constant $C>0$,
and $X_i \lesssim Y_j$ for $i \in I$ and $j \in J$ means that the constant is uniformly bounded over the index sets $I$ and $J$.
If $X \lesssim Y$ and $Y \lesssim X$ then we write $X \asymp Y$.
Coordinate reflection is denoted $\check f(x) = f(-x)$.

\subsection{Besov spaces}

Define
\begin{equation}\label{Djdef}
D_j = \{ \xi \in \rr d: 2^{j-2} \leq |\xi| \leq 2^j \}, \quad j \geq 1.
\end{equation}
Let $\{ \varphi_j \}_{j=0}^\infty \subseteq C_c^\infty(\rr d)$ be a sequence with the following properties \cite{Bergh1}.
\begin{equation}\label{besovpartition1}
\begin{aligned}
& \supp \varphi_0 \subseteq B(0,1), \\
& \supp \varphi_j \subseteq D_j, \quad j \geq 1, \\
& \sum_{j=0}^\infty \varphi_j(\xi) = 1 \quad \forall \xi \in \rr d.
\end{aligned}
\end{equation}
Then we have for $j \geq 0$
\begin{equation}\label{partition1}
2^{j-1} \leq |\xi| \leq 2^j \quad \Rightarrow \quad \varphi_j(\xi) + \varphi_{j+1}(\xi) = 1.
\end{equation}
The functions $\varphi_j$ for $j \geq 1$ are constructed as dilations $\varphi_j(\xi) = \varphi(2^{1-j} \xi)$ for
a function $\varphi \in C_c^\infty(\rr d)$ supported in $D_1$ (cf. \cite{Bergh1}).
Let $p,q \in [1,\infty]$ and let $s \in \ro$.
The Besov space $B_s^{p,q}(\rr d)$ is defined as the space of all $f \in \mathscr S'(\rr d)$ such that
\begin{equation}\label{besovspacenorm1}
\| f \|_{B_s^{p,q}} = \left( \sum_{j=0}^\infty \left( 2^{j s} \| \varphi_j(D) f\|_{L^p} \right)^q \right)^{1/q} < \infty
\end{equation}
when $q<\infty$ and with the standard modification when $q=\infty$ \cite{Bergh1}. We abbreviate $B_s^{p,p}=B_s^{p}$ and $B_0^{p,q} =B^{p,q}$.

\subsection{$\alpha$-modulation spaces}\label{alphasection}

We need the following definitions introduced by Feichtinger and Gr\"obner \cite{Grobner1,Feichtinger1,Feichtinger2,Feichtinger3} (cf. \cite{Borup1,Fornasier1}).

\begin{defn}
A countable set $\mathcal Q$ of subsets $Q \subseteq \rr d$ is called an admissible covering provided
\begin{align}
& \bigcup_{Q \in \mathcal Q} Q = \rr d, \nonumber \\
& |\{ Q' \in \mathcal Q: Q \cap Q' \neq \emptyset \}| \leq n_0 \quad \forall Q \in \mathcal Q, \label{finiteheight}
\end{align}
for some finite integer $n_0$.
\end{defn}

For each $Q \in \mathcal Q$, let
\begin{align}
r_Q & = \sup \{ r \in \ro: B(c,r) \subseteq Q \ \mbox{for some $c \in \rr d$} \}, \label{innermeasure} \\
R_Q & = \inf \{ R \in \ro: Q \subseteq B(c,R)  \ \mbox{for some $c \in \rr d$} \}. \label{outermeasure}
\end{align}

\begin{defn}\label{alphacovering}
Let $\alpha \in [0,1]$. An admissible covering $\{ Q \}_{Q \in \mathcal Q}$ is called an $\alpha$-covering provided there exists a constant $K \geq 1$ such that
\begin{align}
& \mu(Q) \asymp \eabs{ x }^{\alpha d}, \quad x \in Q, \quad Q \in \mathcal Q, \label{alphacoveringsize1} \\
& R_Q/r_Q \leq K, \quad Q \in \mathcal Q. \label{alphacoveringsize2}
\end{align}
\end{defn}

\begin{defn}
Let $\alpha \in [0,1]$ and let $\{ Q \}_{Q \in \mathcal Q}$ be an $\alpha$-covering
of $\rr d$. Then $\{ \psi_Q \}_{Q \in \mathcal Q}$ is called a bounded admissible
partition of unity corresponding to $\mathcal Q$ ($\mathcal Q$-BAPU) provided
\begin{align}
& \supp \psi_Q \subseteq Q, \quad Q \in \mathcal Q, \nonumber \\
& \sum_{Q \in \mathcal Q} \psi_Q (\xi) = 1 \quad \forall \xi \in \rr d, \nonumber \\
& \sup_{Q \in \mathcal Q} \| \mathscr F \psi_Q \|_{L^1} < \infty. \label{bapu3}
\end{align}
\end{defn}

We will call a $\mathcal Q$-BAPU an $\alpha$-BAPU when $\mathcal Q$ is an $\alpha$-covering.

\begin{defn}
Let $\alpha \in [0,1]$, $p, q \in [1,\infty]$, $s \in \ro$, let $\{ Q \}_{Q \in \mathcal Q}$ be an $\alpha$-covering of $\rr d$ and let $\{ \psi_Q \}_{Q \in \mathcal Q}$ be a $\mathcal Q$-BAPU. The weighted $\alpha$-modulation space $M_{\alpha,s}^{p,q}(\rr d)$ is defined as all $f \in \mathscr S'(\rr d)$ such that
\begin{equation}\label{alphamodulationnorm}
\| f \|_{M_{\alpha,s}^{p,q}} = \left( \sum_{Q \in \mathcal Q} \eabs{\xi_Q}^{q s}
\| \psi_Q(D) f \|_{L^p}^q \right)^{1/q} < \infty
\end{equation}
where $\xi_Q \in Q$ for all $Q \in \mathcal Q$, when $q < \infty$. If $q=\infty$ the global $l^q$ norm in \eqref{alphamodulationnorm} is replaced by $l^\infty$.
\end{defn}

The $\alpha$-modulation spaces contain as extreme cases the
frequency-weighted modulation spaces (cf.
\cite{Feichtinger1,Grochenig1}) $M_s^{p,q} = M_{0,s}^{p,q}$ ($\alpha=0$)
and the Besov spaces $B_s^{p,q} = M_{1,s}^{p,q}$ ($\alpha=1$) (cf. \cite{Grobner1}).
The number $\alpha$ thus parametrizes a scale of spaces that in some sense
is intermediate between the modulation spaces and the Besov spaces.
We abbreviate $M_{\alpha,s}^{p,p} =
M_{\alpha,s}^{p}$, $M_s^{p,p}=M_s^{p}$ and $M_0^{p,q}= M^{p,q}$
(the unweighted or classical modulation spaces).
For $t \geq s$ we have the embedding $M_{\alpha,t}^{p,q} \subseteq M_{\alpha,s}^{p,q}$, $\alpha \in [0,1]$, $p, q \in [1,\infty]$.

For $\alpha$ in the interval $0 \leq \alpha < 1$, that is, excluding the Besov
spaces, we will use the following $\alpha$-covering and an associated
$\mathcal Q$-BAPU (cf. \cite{Borup1}). Set
\begin{equation}\label{borupcovering1}
B_k = B( k |k|^\beta, r|k|^\beta), \quad k \in \zz d \setminus 0,
\end{equation}
where $\beta = \alpha/(1-\alpha)$.
Note that $B_k = B(\xi_k,r|\xi_k|^\alpha)$ where $\xi_k=k |k|^\beta$.
For $r>0$ sufficiently large, $\mathcal
Q = \{ B_k \}_{k \in \zz d \setminus 0}$ is an $\alpha$-covering of $\rr d$
according to \cite[Theorem 2.6]{Borup1}. Moreover, a $\mathcal Q$-BAPU
$\{ \psi_k\}_{k \in \zz d \setminus 0}$ such that $\supp \psi_k \subseteq B_k$
for all $k \in \zz d \setminus 0$ can be constructed (see
\cite[Proposition A.1]{Borup1}).

We will use Borup and Nielsen's Banach frame construction for $M_{\alpha,s}^{p,q}(\rr d)$, based on multivariate brushlet systems (cf. \cite{Borup1}).
Let
\begin{equation}\nonumber
Q_k = Q ( k |k|^\beta, r|k|^\beta), \quad k \in \zz d \setminus 0,
\end{equation}
where again $\beta = \alpha/(1-\alpha)$.
If $r>0$ is sufficiently large then $\mathcal Q = \{ Q_k \}_{k \in \zz d \setminus 0}$ is an $\alpha$-covering of $\rr d$.
One can construct a sequence of functions
\begin{equation}\nonumber
( w_{n,k} )_{n \in \non d, \ k \in \zz d \setminus 0} \subseteq \mathscr S(\rr d)
\end{equation}
such that $( w_{n,k} )_{n \in \non d}$ is an orthonormal system, with $\supp \wh w_{n,k} \subseteq Q_k$,
for each $k \in \zz d \setminus 0$.
Each function $w_{n,k}$ is constructed as a tensor product
\begin{equation}\label{wnkdefinition}
w_{n,k}= \bigotimes_{j=1}^d w_{n_j,I_{k,j}}
\end{equation}
where
$Q_k = \Pi_{j=1}^d I_{k,j}$, whose components are, simplifying notation to $n=n_j$, $I=I_{k,j}$,
\begin{equation}\nonumber
w_{n,I}(x) = \sqrt{\frac{\mu(I)}{2}} \ e^{i a_I x} \ \Big( g(\mu(I) (x+e_{n,I}) + g(\mu(I) (x-e_{n,I}) \Big), \quad x \in \ro,
\end{equation}
where $e_{n,I} = \pi(n+1/2)/\mu(I)$,
$a_I$ denotes the left end point of $I$, i.e. $I=[a_I,b_I]$,
and $g \in \mathscr F C_c^\infty(\ro)$ with $\supp \wh g \subseteq [0,1]$.
For more details about the sequence of functions $( w_{n,k} )_{n \in \non d, \ k \in \zz d \setminus 0}$ we refer to \cite{Borup1}.

Borup and Nielsen \cite{Borup1} show that the sequence $( w_{n,k} )$ is a (quasi-) Banach frame for $M_{\alpha,s}^{p,q}(\rr d)$
for $0 < p,q \leq \infty$ and $s \in \ro$. We restrict our interest to the exponents $p,q \in [1,\infty]$.
Let $p,q \in [1,\infty]$, $s \in \ro$, let $f \in M_{\alpha,s}^{p,q}(\rr d)$, and define the coefficient sequence
\begin{equation}\label{coefficientoperator1}
c_{n,k} = (f, w_{n,k})_{L^2}, \quad n \in \non d, \quad k \in \zz d \setminus 0
\end{equation}
where $w_{n,k}$ is defined by \eqref{wnkdefinition}.
The coefficient operator is defined by $(D f)_{n,k} = c_{n,k}$, $n \in \non d$, $k \in \zz d \setminus 0$.
The Banach frame property means in this case that
\begin{equation}\label{normequivalence1}
\| f \|_{M_{\alpha,s}^{p,q}} \asymp \| c \|_{m_{\alpha,s}^{p,q}},
\end{equation}
where the sequence space $m_{\alpha,s}^{p,q} = m_{\alpha,s}^{p,q}(\non d \times \zz d \setminus 0 )$ is defined by the norm
\begin{equation}\label{sequencespace1}
\| c \|_{m_{\alpha,s}^{p,q}} = \left( \sum_{k \in \zz d \setminus 0} \left( \sum_{n \in \non d}
\left( |k|^{\frac1{1-\alpha} \left( s + \alpha d \left( \frac1{2} - \frac1{p} \right) \right)} |c_{n,k}| \right)^p \right)^{q/p} \right)^{1/q}
\end{equation}
when $p,q < \infty$ and suitably modified otherwise.
Moreover, there exists a reconstruction operator $R$ defined by
$$
R \ c = \sum_{k \in \zz d \setminus 0, \ n \in \non d} c_{n,k} \ \wt w_{n,k},
$$
where $( \wt w_{n,k} )_{k \in \zz d \setminus 0, n \in \non d}$ is a dual frame defined by $\wt w_{n,k} = \psi_k(D) w_{n,k}$,
$n \in \non d$, $k \in \zz d \setminus 0$.
The operator $R$ is bounded as
\begin{equation}\label{reconstruction1}
\| R c \|_{M_{\alpha,s}^{p,q}} \lesssim \| c \|_{m_{\alpha,s}^{p,q}}, \quad c \in m_{\alpha,s}^{p,q},
\end{equation}
and $R D = id_{M_{\alpha,s}^{p,q}}$. These results are proved in \cite[Theorem 4.3]{Borup1}.

Let $\mathscr{M}_{\alpha,s}^{p,q}(\rr d)$ be the completion of $\mathscr{S}(\rr d)$ in the norm $\|
\cdot \|_{M_{\alpha,s}^{p,q}(\rr d)}$. In the next result we collect some important properties of the $\alpha$-modulation spaces. The result is a generalization of the corresponding result for modulation spaces.

\begin{prop}\label{alphaproperties}
Let $\alpha \in [0,1]$, $s \in \ro$ and $p,q \in [1,\infty]$. The following holds.
\begin{enumerate}
\item[{\rm{(i)}}] The space $M_{\alpha,s}^{p,q}(\rr d)$ is a Banach
space which is independent of the sequence $\{ \xi_Q \}_{Q \in \mathcal Q}$ as long as
$\xi_Q \in Q$ for all $Q \in \mathcal Q$, and also independent of the $\alpha$-covering $\{ Q \}_{Q \in \mathcal Q}$ and of the $\mathcal Q$-BAPU $\{ \psi_Q \}_{Q \in \mathcal Q}$. Varying these parameters gives rise to equivalent norms.

\vrum

\item[{\rm{(ii)}}] The $L^2$-product $(\cdot,\cdot)$ on $\mathscr S (\rr {d}) \times \mathscr S (\rr {d})$
extends to a continuous sesquilinear form on $M_{\alpha,s}^{p,q} (\rr {d})\times M_{\alpha,-s}^{p',q'}(\rr
{d})$. Furthermore,
$$
\| f \| = \sup |(f,g)|
$$
with supremum taken over all $g\in \mathscr S (\rr d)$ such that $\| g \|_{M_{\alpha,-s}^{p',q'}} \leq 1$, is a norm equivalent to $\| f \|_{M_{\alpha,s}^{p,q}}$. If $p,q<\infty$, then the dual space of $M_{\alpha,s}^{p,q}$ can be identified with $M_{\alpha,-s}^{p',q'}$ through the form $(\cdot,\cdot)$.

\vrum

\item[{\rm{(iii)}}] Assume that $0\leq \theta \leq 1$, $p,q,p_1,p_2,q_1,q_2 \in [1,\infty]$, $s, s_1, s_2 \in \ro$
satisfy
\begin{equation*}
\frac{1}{p} = \frac{1-\theta}{p_1}+\frac{\theta}{p_2}, \quad
\frac{1}{q} = \frac{1-\theta}{q_1}+\frac{\theta}{q_2}, \quad
s = (1-\theta) s_1 + \theta s_2.
\end{equation*}
Then complex interpolation gives
$$
(\mathscr{M}_{\alpha,s_1}^{p_1,q_1},\mathscr{M}_{\alpha,s_2}^{p_2,q_2})_{[\theta]}= \mathscr{M}_{\alpha,s}^{p,q}.
$$

\vrum

\item[{\rm{(iv)}}] It holds $\mathscr{M}_{\alpha,s}^{p,q} \subseteq
M_{\alpha,s}^{p,q}$ with equality if
$p<\infty$ and $q<\infty$.
\end{enumerate}
\end{prop}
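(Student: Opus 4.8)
The plan is to deduce all four parts from the Borup--Nielsen Banach frame $(D,R)$ recalled above, the elementary structure of the weighted mixed-norm sequence space $m_{\alpha,s}^{p,q}$, and one almost-orthogonality estimate from decomposition space theory. For part (i), the identity $RD=\mathrm{id}$ together with the boundedness of $D$ and $R$ (from \eqref{normequivalence1} and \eqref{reconstruction1}) shows that $D$ maps $M_{\alpha,s}^{p,q}$ isomorphically onto its range, a closed subspace of the Banach space $m_{\alpha,s}^{p,q}$ complemented by the bounded projection $DR$; hence $M_{\alpha,s}^{p,q}$ is complete. Independence of $\{\xi_Q\}$ is immediate from \eqref{alphacoveringsize1}, which forces $\eabs{x}\asymp\eabs{y}$ for $x,y$ in a common $Q$. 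Independence of the $\alpha$-covering and of the BAPU is the technical core: given a second $\alpha$-covering $\{P\}$ with BAPU $\{\phi_P\}$ I would write $\psi_Q(D)f=\sum_{P:\,P\cap Q\neq\emptyset}\psi_Q(D)\phi_P(D)f$, where the number of nonzero terms is bounded uniformly in $Q$ by the overlap geometry of $\alpha$-coverings (admissibility together with \eqref{alphacoveringsize1}--\eqref{alphacoveringsize2}); bound each summand by Young's inequality and \eqref{bapu3}, $\|\psi_Q(D)\phi_P(D)f\|_{L^p}\le\|\mathscr F^{-1}\psi_Q\|_{L^1}\|\phi_P(D)f\|_{L^p}\lesssim\|\phi_P(D)f\|_{L^p}$; then pass to the weighted $\ell^q$ norm, using $\eabs{\xi_Q}\asymp\eabs{\xi_P}$ when $Q\cap P\neq\emptyset$ and that each $P$ meets only finitely many $Q$. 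This yields the two inequalities between the $\{\psi_Q\}$-norm and the $\{\phi_P\}$-norm, i.e.\ equivalence. (This is the $\alpha$-covering version of Feichtinger--Gr\"obner's argument, cf.\ \cite{Grobner1,Borup1}.)

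For parts (ii) and (iii) I would first fix a fattened family $\{\Psi_Q\}$ with $\Psi_Q\psi_Q=\psi_Q$, $\supp\Psi_Q$ in a fixed dilate of $Q$, and $\sup_Q\|\mathscr F^{-1}\Psi_Q\|_{L^1}<\infty$. For $f\in M_{\alpha,s}^{p,q}$ and $g\in M_{\alpha,-s}^{p',q'}$, set $(f,g):=\sum_Q(\psi_Q(D)f,\Psi_Q(D)g)$; H\"older in $L^p$--$L^{p'}$ followed by H\"older in $\ell^q$--$\ell^{q'}$ with the weights $\eabs{\xi_Q}^{\pm s}$ shows this sum is absolutely convergent and bounded by $\|f\|_{M_{\alpha,s}^{p,q}}\|g\|_{M_{\alpha,-s}^{p',q'}}$, and it reduces to the $L^2$ product on $\mathscr S\times\mathscr S$; its independence of the auxiliary data follows from part (i). The reverse bound $\|f\|_{M_{\alpha,s}^{p,q}}\lesssim\sup\{|(f,g)|:g\in\mathscr S,\ \|g\|_{M_{\alpha,-s}^{p',q'}}\le1\}$ is obtained by choosing, on each $Q$, a function with spectrum in $Q$, unit $L^{p'}$ norm, nearly realizing $\|\psi_Q(D)f\|_{L^p}$, assembling these with near-extremal $\ell^{q'}$ coefficients, and approximating in $\mathscr S$. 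For the dual identification when $p,q<\infty$ I would use that $(m_{\alpha,s}^{p,q})^*=m_{\alpha,-s}^{p',q'}$ under the natural bilinear pairing and transport this isomorphism along the retract $(D,R)$, checking by means of $RD=\mathrm{id}$ and the explicit dual frame $\wt w_{n,k}$ that the transported pairing is $(\cdot,\cdot)$. Part (iii) follows because complex interpolation commutes with retracts and, by Calder\'on's computation for weighted mixed-norm sequence spaces \cite{Bergh1}, $(m_{\alpha,s_1}^{p_1,q_1},m_{\alpha,s_2}^{p_2,q_2})_{[\theta]}=m_{\alpha,s}^{p,q}$ (the exponents match since the weight exponent in \eqref{sequencespace1} is affine in $s$ and in $1/p$); applying $(D,R)$ to the couple $(\mathscr M_{\alpha,s_1}^{p_1,q_1},\mathscr M_{\alpha,s_2}^{p_2,q_2})$ then identifies the interpolation space with the corresponding retract of $m_{\alpha,s}^{p,q}$, namely $\mathscr M_{\alpha,s}^{p,q}$.

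For part (iv) I would first record the continuous inclusion $M_{\alpha,s}^{p,q}\hookrightarrow\mathscr S'$; by Bernstein's inequality $\|\psi_Q(D)f\|_{L^\infty}\lesssim\eabs{\xi_Q}^{\alpha d/p}\|\psi_Q(D)f\|_{L^p}$ this reduces to the case $p=q=\infty$, which is routine (cf.\ \cite{Borup1}). Then a sequence $(f_j)\subseteq\mathscr S$ that is Cauchy for $\|\cdot\|_{M_{\alpha,s}^{p,q}}$ converges in $\mathscr S'$ to some $f\in M_{\alpha,s}^{p,q}$, while its coefficient sequences $Df_j$, being Cauchy in $m_{\alpha,s}^{p,q}$ by \eqref{normequivalence1}, converge there to a limit that must coincide with $Df$ since $(f_j,w_{n,k})\to(f,w_{n,k})$ for each $(n,k)$; hence $\|f_j\|_{M_{\alpha,s}^{p,q}}\to\|Df\|_{m_{\alpha,s}^{p,q}}\asymp\|f\|_{M_{\alpha,s}^{p,q}}$, so the completion map $\mathscr M_{\alpha,s}^{p,q}\to M_{\alpha,s}^{p,q}$ is a norm-equivalent injection, i.e.\ $\mathscr M_{\alpha,s}^{p,q}\subseteq M_{\alpha,s}^{p,q}$. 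When $p,q<\infty$, equality follows by truncating the frame expansion $f=\sum_{n,k}c_{n,k}\wt w_{n,k}$: the partial sums lie in $\mathscr S$, and by dominated convergence on $m_{\alpha,s}^{p,q}$ combined with \eqref{reconstruction1} the tails tend to $0$ in $M_{\alpha,s}^{p,q}$, so $\mathscr S$ is dense.

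I expect the main obstacle to be the covering/BAPU independence in part (i): unlike the other parts it is not a formal consequence of the frame $(D,R)$ but rests on the almost-orthogonality estimate and a careful accounting of how one $\alpha$-covering overlaps another, and it underlies the normalizations used throughout (ii)--(iv). The next most delicate point is matching the transported sequence-space pairing with the $L^2$ form $(\cdot,\cdot)$ in the dual identification of part (ii).
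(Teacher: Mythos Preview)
Your plan is broadly sound, and for part (ii) it is close to the paper's own argument. There is, however, one genuine gap: the Borup--Nielsen frame $(D,R)$ and the sequence space $m_{\alpha,s}^{p,q}$ are available only for $0\le\alpha<1$, since the covering parameter $\beta=\alpha/(1-\alpha)$ in \eqref{borupcovering1} blows up at $\alpha=1$ and the paper explicitly restricts the brushlet construction to this range. Every step you route through $(D,R)$ --- completeness in (i), the dual identification in (ii), retract interpolation in (iii), and density in (iv) --- therefore leaves the Besov endpoint $\alpha=1$ uncovered. Your covering-independence argument in (i) via almost-orthogonality and \eqref{bapu3} does apply to all $\alpha$, but the remaining items need either a parallel argument with the dyadic Littlewood--Paley system $\{\varphi_j\}$ or a citation to the classical Besov theory.

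By way of comparison: the paper disposes of (i), (iii), and (iv) entirely by citation to Feichtinger--Gr\"obner \cite{Feichtinger2,Feichtinger3} and gives a detailed proof only for (ii). There, for $\alpha<1$, it does essentially what you propose: it invokes \eqref{normequivalence1} and Benedek--Panzone's mixed-norm duality \cite{Benedek1} to find a finitely supported sequence $d$, then sets $g=\sum_{n,k}\omega_k\, d_{n,k}\,w_{n,k}\in\mathscr S$ to realize the reverse bound $\|f\|_{M_{\alpha,s}^{p,q}}\lesssim\|f\|$. For $\alpha=1$ it runs the analogous scheme with the dyadic $\{\varphi_j\}$ in place of the brushlet frame. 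Your alternative construction in (ii), building $g$ by choosing near-dual functions on each $Q$ directly, is workable but harder to force into $\mathscr S$; the frame route is cleaner precisely because finite sums of the $w_{n,k}$ are automatically Schwartz.
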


\begin{proof}
(i) See \cite[Theorems 2.2, 2.3 and 3.7]{Feichtinger2} and \cite[Theorem 4.1]{Feichtinger3}.

(ii)
The fact that the dual space of $M_{\alpha,s}^{p,q}$, for $1 \leq p,q<\infty$, can be identified with $M_{\alpha,-s}^{p',q'}$ is a consequence of \cite[Theorem 2.8]{Feichtinger2}.
Let $1 \leq p,q \leq \infty$.
From \cite[Theorem 2.3]{Feichtinger2} it follows
\begin{equation}\nonumber
\left| (f,g) \right| \lesssim \| f \|_{M_{\alpha,s}^{p,q}} \| g \|_{M_{\alpha,-s}^{p',q'}}, \quad g \in \mathscr S(\rr d).
\end{equation}
For the reverse inequality we first let $0 \leq \alpha < 1$.
By \eqref{normequivalence1}
\begin{equation}\nonumber
\| f \|_{M_{\alpha,s}^{p,q}} \lesssim \| c \|_{m_{\alpha,s}^{p,q}},
\end{equation}
where the sequence $c$ is defined by \eqref{coefficientoperator1}.
The $m_{\alpha,s}^{p,q}$-norm of $c$ is the mixed $\ell^{p,q}$ norm of $\omega c$,
where the weight $\omega$ depends on $p,\alpha,s$ as
\begin{equation}\nonumber
\omega_{n,k} = \omega_k = |k|^{\frac1{1-\alpha} \left( s + \alpha d \left( \frac1{2}-\frac1{p} \right) \right)}.
\end{equation}
An application of \cite[Lemma 3.1]{Benedek1} yields
\begin{equation}\nonumber
\| c \|_{m_{\alpha,s}^{p,q}} = \| \omega c \|_{\ell^{p,q}} = \sup | (\omega c,d)_{\ell^2} |
\end{equation}
with supremum taken over all sequences $(d_{n,k})$ of finite support and $\| d \|_{\ell^{p',q'}} \leq 1$.
Let $(d_{n,k})$ be a sequence of finite support such that $\| d \|_{\ell^{p',q'}} \leq 1$ and
\begin{equation}\nonumber
\| \omega c \|_{\ell^{p,q}} \leq 2 | (\omega c,d)_{\ell^2} |,
\end{equation}
and set
\begin{equation}\nonumber
g = \sum_{k \in \zz d \setminus 0} \ \sum_{n \in \non d} \omega_k \ d_{n,k} \ w_{n,k}.
\end{equation}
Then $g \in \mathscr S(\rr d)$ since the sum is finite, and $(f,g) = (\omega c,d)_{\ell^2}$.
The following inequality follows from the proofs of \cite[Lemma 3.2 and Lemma 4.2]{Borup1}. If $p,q \in [1,\infty]$ and $s \in \ro$, then
\begin{equation}\nonumber
\left\| \sum_{k \in \zz d \setminus 0} \ \sum_{n \in \non d} d_{n,k} \ w_{n,k} \right\|_{M_{\alpha,-s}^{p',q'}}
\lesssim \| d \|_{m_{\alpha,-s}^{p',q'}}.
\end{equation}
This gives
\begin{equation}\nonumber
\| g \|_{M_{\alpha,-s}^{p',q'}} \lesssim \| \omega d \|_{m_{\alpha,-s}^{p',q'}}
= \| d \|_{\ell^{p',q'}} \leq 1.
\end{equation}
Hence we have proved that $\| f \|_{M_{\alpha,s}^{p,q}} \lesssim \| f \|$ when $0 \leq \alpha < 1$.

It remains to prove the corresponding inequality when $\alpha=1$, in which case $M_{\alpha,s}^{p,q} = B_s^{p,q}$.
Let $\{ \varphi_j \}_{j=0}^\infty \subseteq C_c^\infty(\rr d)$ be a sequence that satisfies \eqref{besovpartition1}
and $\varphi_j (\xi) = \varphi(2^{1-j} \xi)$ for $j \geq 1$ where $\varphi \in C_c^\infty(\rr d)$ and $\supp \varphi \subseteq D_1$.
The $B_s^{p,q}$-norm defined by \eqref{besovspacenorm1} is the mixed Lebesgue norm $L^{p,q}(\rr d \times \noo)$,
where $\rr d$ is equipped with the Lebesgue measure and $\noo$ with the counting measure,
of the function $F(x,j) = 2^{js} \varphi_j(D) f(x)$.
According to \cite[Lemma 3.1]{Benedek1} we have
\begin{equation}\nonumber
\| f \|_{B_s^{p,q}} = \sup \left| \sum_{j=0}^\infty 2^{js} (\varphi_j(D) f,g_j)_{L^2} \right|
\end{equation}
where the supremum is taken over all sequences $(g_j)_{0}^\infty$ of simple functions of compact support
$g_j$ such that $g_j \equiv 0$ for $j > N$ for some $N \geq 0$, and
\begin{equation}\nonumber
\left( \sum_{j=0}^\infty \| g_j \|_{L^{p'}}^{q'} \right)^{1/q'} \leq 1
\end{equation}
if $q'<\infty$, and $\sup_{0 \leq j < \infty} \| g_j \|_{L^{p'}} \leq 1$ if $q'=\infty$.
Therefore there exists $N \geq 0$ and $(g_j)_{0}^N \subseteq L^{p'}(\rr d)$ such that
\begin{equation}\nonumber
\| f \|_{B_s^{p,q}} \leq 2 \sum_{j=0}^N 2^{js} (\varphi_j(D) f,g_j)_{L^2}
= 2( f,\sum_{j=0}^N 2^{js} \varphi_j(D) g_j)_{L^2}
\end{equation}
and
\begin{equation}\label{unitball1}
\left( \sum_{j=0}^N \| g_j \|_{L^{p'}}^{q'} \right)^{1/q'} \leq 1
\end{equation}
(modified as above if $q'=\infty$).
Set
$$
g = \sum_{j=0}^N 2^{js} \varphi_j(D) g_j \in \mathscr S(\rr d).
$$
We have $\sup_{j \geq 0} \| \mathscr F^{-1} \varphi_j \|_{L^1} \lesssim 1$.
By means of \eqref{partition1} and Young's inequality, we obtain for $k \geq 1$
\begin{equation}\nonumber
\begin{aligned}
\| \varphi_k(D) g \|_{L^{p'}}
& = \left\| \sum_{j = k-1}^{\min (N,k+1)} 2^{js} \varphi_k(D)  \varphi_j(D) g_j \right\|_{L^{p'}} \\
& \lesssim
2^{(k-1)s} \left\| g_{k-1} \right\|_{L^{p'}}
+ 2^{ks} \left\| g_{k} \right\|_{L^{p'}}
+ 2^{(k+1)s} \left\| g_{k+1} \right\|_{L^{p'}},
\end{aligned}
\end{equation}
and
\begin{equation}\nonumber
\begin{aligned}
\| \varphi_0(D) g \|_{L^{p'}}
& = \left\| \sum_{j = 0}^{\min (N,1)} 2^{js} \varphi_0(D)  \varphi_j(D) g_j \right\|_{L^{p'}} \\
& \lesssim
\left\| g_{0} \right\|_{L^{p'}}
+ 2^s \left\| g_{1} \right\|_{L^{p'}},
\end{aligned}
\end{equation}
which gives, by means of \eqref{unitball1}, $\| g \|_{B_{-s}^{p',q'}} \lesssim 1$.
It follows that $\| f \|_{M_{s,1}^{p,q}} \lesssim \| f \|$.

(iii) This follows from \cite[Corollary 2.4]{Feichtinger2} (cf. \cite[Bemerkung F.2]{Grobner1}).

(iv) See \cite[Theorem 2.2]{Feichtinger2}.
\end{proof}

\section{Embeddings of $\alpha$-modulation spaces}

We need the following elementary lemma (cf. \cite[Prop.~2.5]{Hanwang1} and \cite{Grobner1}), a proof of which is provided as a service to the reader.

\begin{lem}\label{alphasobolev}
If $\alpha \in [0,1]$ and $s \in \ro$ then
$M_{\alpha,s}^{2}(\rr d) = H_s(\rr d)$.
\end{lem}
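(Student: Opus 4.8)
The plan is to unwind the definition of the $\alpha$-modulation norm in the Hilbert case $p=q=2$ and compare it directly with the Sobolev norm by means of Plancherel's formula. By Proposition~\ref{alphaproperties}(i) we may fix a convenient $\alpha$-covering $\mathcal Q$, a $\mathcal Q$-BAPU $\{\psi_Q\}_{Q\in\mathcal Q}$ with each $\psi_Q\in C_c^\infty(\rr d)$ (the Borup covering for $\alpha<1$, the Besov partition for $\alpha=1$ both provide this), and points $\xi_Q\in Q$, so that $\psi_Q(D)f=\mathscr F^{-1}(\psi_Q\wh f)$ is unambiguously defined for $f\in\mathscr S'(\rr d)$. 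For any $f$ with $\wh f\in L^2_{\mathrm{loc}}(\rr d)$ one has $\psi_Q(D)f\in L^2(\rr d)$, and Plancherel gives $\|\psi_Q(D)f\|_{L^2}^2=\int_{\rr d}|\psi_Q(\xi)|^2|\wh f(\xi)|^2\,d\xi$. Summing over $Q$ and interchanging sum and integral (all terms nonnegative, Tonelli) yields
\begin{equation*}
\|f\|_{M_{\alpha,s}^{2}}^2=\int_{\rr d}w(\xi)\,|\wh f(\xi)|^2\,d\xi,\qquad
w(\xi)=\sum_{Q\in\mathcal Q}\eabs{\xi_Q}^{2s}\,|\psi_Q(\xi)|^2 .
\end{equation*}
Everything thus reduces to the pointwise estimate $w(\xi)\asymp\eabs{\xi}^{2s}$, uniformly in $\xi\in\rr d$.

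For this I would use three elementary facts. First, by \eqref{finiteheight} together with $\supp\psi_Q\subseteq Q$, for each fixed $\xi$ at most $n_0$ of the terms defining $w(\xi)$ are nonzero (fix one $Q_0\ni\xi$; every other $Q$ containing $\xi$ meets $Q_0$). Second, $\eabs{\xi_Q}\asymp\eabs{\xi}$ whenever $\xi\in Q$, uniformly over $Q\in\mathcal Q$: for $\alpha>0$ this is immediate from \eqref{alphacoveringsize1}, since $\eabs{\xi}^{\alpha d}\asymp\mu(Q)\asymp\eabs{\xi_Q}^{\alpha d}$, while for $\alpha=0$ it follows because \eqref{alphacoveringsize1} and \eqref{alphacoveringsize2} force $\sup_{Q\in\mathcal Q}\operatorname{diam} Q<\infty$. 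Hence $\eabs{\xi_Q}^{2s}\asymp\eabs{\xi}^{2s}$ for every $s\in\ro$. Third, $\|\psi_Q\|_{L^\infty}\le(2\pi)^{-d/2}\|\mathscr F\psi_Q\|_{L^1}$, which is uniformly bounded by \eqref{bapu3}. The upper bound $w(\xi)\lesssim\eabs{\xi}^{2s}$ then follows by combining the first and third facts with the second, and the lower bound from $\sum_{Q}\psi_Q(\xi)=1$ and the Cauchy--Schwarz inequality applied to the at most $n_0$ nonzero terms,
\begin{equation*}
1=\Bigl|\sum_{Q\in\mathcal Q}\psi_Q(\xi)\Bigr|^2\le n_0\sum_{Q\in\mathcal Q}|\psi_Q(\xi)|^2 ,
\end{equation*}
so that $w(\xi)\gtrsim\eabs{\xi}^{2s}\sum_Q|\psi_Q(\xi)|^2\gtrsim\eabs{\xi}^{2s}$.

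Combining these estimates gives $\|f\|_{M_{\alpha,s}^{2}}\asymp\|f\|_{H_s}$ for every $f\in\mathscr S'(\rr d)$ with $\wh f\in L^2_{\mathrm{loc}}$, which yields the equivalence of norms. For the coincidence of the underlying sets it remains to observe that $f\in M_{\alpha,s}^{2}(\rr d)$ forces $\wh f\in L^2_{\mathrm{loc}}$: each $\psi_Q(D)f\in L^2$, hence $\psi_Q\wh f\in L^2$, and on any bounded set $\wh f$ coincides with the finite sum $\sum_{Q\cap K\neq\emptyset}\psi_Q\wh f$. The only genuinely non-formal point is the uniform comparison $\eabs{\xi_Q}\asymp\eabs{\xi}$ on a covering set, whose justification splits according to whether $\alpha=0$ or $\alpha>0$; everything else is bookkeeping with Plancherel, Tonelli and the finite-overlap property. (Alternatively one could reduce the case $\alpha=1$ to the classical identity $B_s^{2,2}=H_s$, but the argument above handles all $\alpha\in[0,1]$ simultaneously.)
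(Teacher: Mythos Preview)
Your proof is correct and follows essentially the same route as the paper: Plancherel reduces the comparison to the pointwise estimate $\sum_Q\eabs{\xi_Q}^{2s}|\psi_Q(\xi)|^2\asymp\eabs{\xi}^{2s}$, which is obtained from $\eabs{\xi_Q}\asymp\eabs{\xi}$ on $Q$ and the finite-overlap property. The only cosmetic differences are that the paper treats $\alpha=1$ separately by quoting $B_s^{2}=H_s$, uses the extra normalisation $0\le\psi_k\le1$ (so $\psi_k^2\le\psi_k$) for the upper bound, and proves the lower bound $\sum_Q|\psi_Q(\xi)|^2\ge n_0^{-1}$ by contradiction rather than by your one-line Cauchy--Schwarz argument; your versions of these steps are at least as clean.
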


\begin{proof}
For the Besov space case ($\alpha=1$) the result $B_s^2(\rr d) = H_s(\rr d)$ is well known (see e.g. \cite[Theorem 6.4.4]{Bergh1}). Let $0 \leq \alpha < 1$.
We use the $\alpha$-covering \eqref{borupcovering1} $\{ B_k \}_{k \in \zz d \setminus 0}$
for $r>0$ sufficiently large, and an associated BAPU $\{ \psi_k\}_{k \in \zz d \setminus 0}$ such that $0 \leq \psi_k \leq 1$ for all $k \in \zz d \setminus 0$.
Parseval's formula and \eqref{alphacoveringsize1} yield
\begin{equation}\nonumber
\begin{aligned}
\| f \|_{M_{\alpha,s}^{2}(\rr d)}^2 & = \sum_{k \in \zz d \setminus 0} \eabs{\xi_k}^{2s}
\int_{B_k} \psi_k(\xi)^2 |\wh f(\xi)|^2 d \xi
\\
& \lesssim \sum_{k \in \zz d \setminus 0}   \int_{B_k} \psi_k(\xi) \eabs{\xi}^{2s} |\wh f(\xi)|^2 d \xi
= \| f \|_{H_s(\rr d)}^2,
\end{aligned}
\end{equation}
i.e. $H_s \subseteq M_{\alpha,s}^2$. For the opposite inclusion, we note that
\begin{equation}\label{lowerbound1}
\sum_{k \in \zz d \setminus 0} \psi_k(\xi)^2 \geq C, \quad \xi \in \rr d,
\end{equation}
holds for some $C>0$. In fact, if this would not the case, then for any $\ep>0$ there exists $\xi \in \rr d$ such that
\begin{equation}\nonumber
\sum_{k \in \zz d \setminus 0} \psi_k(\xi)^2 < \ep.
\end{equation}
Let $\ep < n_0^{-2}$ where $n_0$ is the upper bound \eqref{finiteheight} corresponding to the covering $\{ B_k \}_{k \in \zz d \setminus 0}$,
and let $\xi \in \rr d$ denote the corresponding vector.
Then $\psi_k(\xi) < \sqrt{\ep}$ for all $k \in \zz d \setminus 0$.
Since $\xi \in B_j$ for some $j \in \zz d \setminus 0$ we obtain from
\eqref{finiteheight}
$$
\sum_{k \in \zz d \setminus 0} \psi_k(\xi) = \sum_{k: \ B_k \cap B_j \neq \emptyset}
\psi_k(\xi) < n_0 \sqrt{\ep} < 1
$$
which is a contradiction. Thus \eqref{lowerbound1} holds for some $C>0$.

By means of \eqref{lowerbound1} and again \eqref{alphacoveringsize1} we obtain
\begin{equation}\nonumber
\begin{aligned}
\| f \|_{H_s(\rr d)}^2 & \leq C^{-1} \int_{\rr d} \sum_{k \in \zz d \setminus 0} \psi_k(\xi)^2 \eabs{\xi}^{2s}  |\wh f(\xi)|^2 d \xi \\
& \lesssim \sum_{k \in \zz d \setminus 0} \eabs{\xi_k}^{2s} \int_{B_k} \psi_k(\xi)^2 |\wh f(\xi)|^2 d \xi \\
& = \| f \|_{M_{\alpha,s}^{2}(\rr d)}^2,
\end{aligned}
\end{equation}
i.e. $M_{\alpha,s}^2 \subseteq H_s$ and the proof is complete.
\end{proof}

Embeddings for $\alpha$-modulation spaces have been proved by Gr\"obner
\cite{Grobner1}, Han and Wang \cite{Hanwang1}, and, for the modulation space case $\alpha=0$,
by Okoudjou \cite{Okoudjou1} and the first named author of this
article \cite{Toft1,Toft2}.

The result \cite[Theorem 2.10]{Toft2} imply the embeddings, for $p,q \in [1,\infty]$
and $s \in \ro$,
\begin{equation}\label{toftembedding1}
B_{s + d \theta_1(p,q)}^{p,q}(\rr d) \subseteq M_{0,s}^{p,q} (\rr d) \subseteq B_{s + d \theta_2(p,q)}^{p,q}(\rr d).
\end{equation}
Here the indices $\theta_1$ and $\theta_2$ are defined by
\begin{equation}\label{indices1}
\begin{aligned}
\theta_1(p,q) & = \max \left( 0,q^{-1} - \min( p^{-1},p'^{-1})  \right), \\
\theta_2(p,q) & = \min \left( 0,q^{-1} - \max( p^{-1},p'^{-1})  \right) = - \theta_1(p',q').
\end{aligned}
\end{equation}
The unweighted versions (i.e. $s=0$) of these embeddings were proved in \cite[Theorem 3.1]{Toft1}.
They imply the embeddings, for $p,q \in [1,\infty]$,
\begin{equation}\label{toftembedding2}
B_{d \theta_1(p,q)}^{p,q}(\rr d) \subseteq M^{p,q} (\rr d) \subseteq B_{d \theta_2(p,q)}^{p,q}(\rr d),
\end{equation}
and they have been proven to be sharp.
The sharpness was obtained independently by Huang and Wang \cite[Theorem 1.1]{Wang1},
and by Sugimoto and Tomita \cite[Theorem 1.2]{Sugimoto1}, and means the following.
If $p,q \in [1,\infty]$ and $B_s^{p,q}(\rr d) \subseteq M^{p,q}(\rr d)$ then $s \geq d \theta_1(p,q)$.
If $p,q \in [1,\infty]$ and $M^{p,q} (\rr d) \subseteq B_s^{p,q}(\rr d)$ then $s \leq d \theta_2(p,q)$.
(By duality, the two assertions are equivalent.)
This gives the sharpness also for the weighted case \eqref{toftembedding1},
since $\eabs{D}^t$ is a homeomorphism
$B_{s}^{p,q} \mapsto B_{s-t}^{p,q}$ for any $t,s \in \ro$ (cf. \cite{Bergh1}) as well as
$M_{0,s}^{p,q} \mapsto M_{0,s-t}^{p,q}$ for any $t,s \in \ro$ (cf. \cite[Cor.~2.3]{Toft2}).
The sharpness of \eqref{toftembedding1} reads:
\begin{equation}\nonumber
\begin{aligned}
B_t^{p,q}(\rr d) \subseteq M_{0,s}^{p,q}(\rr d) \quad & \Longrightarrow \quad t \geq s + d \theta_1(p,q), \quad p,q \in [1,\infty], \\
M_{0,s}^{p,q}(\rr d) \subseteq B_t^{p,q}(\rr d) \quad & \Longrightarrow \quad t \leq s + d \theta_2(p,q), \quad p,q \in [1,\infty].
\end{aligned}
\end{equation}

Note that the embeddings \eqref{toftembedding1} and \eqref{toftembedding2}
are restricted to upper and lower embeddings of $0$-modulation spaces in $1$-modulation spaces,
and give no information on upper and lower embeddings of $M_{\alpha_1,s}^{p,q}$
in $M_{\alpha_2,t}^{p,q}$ for general $\alpha_1,\alpha_2 \in [0,1]$.

Gr\"obner's embeddings \cite[Theorems F.6, F.7 and pp.~66--68]{Grobner1} reads
\begin{equation}\label{grobnerembedding1}
M_{\alpha_2,s + d(\alpha_2-\alpha_1)\nu_1(p,q) }^{p,q}(\rr d)
\subseteq M_{\alpha_1,s}^{p,q} (\rr d)
\subseteq M_{\alpha_2,s + d (\alpha_2-\alpha_1) \nu_2(p,q)}^{p,q}(\rr d),
\end{equation}
for $0 \leq \alpha_1 \leq \alpha_2 \leq 1$, $p,q \in [1,\infty]$ and $s \in \ro$, where the indices $\nu_1$ and $\nu_2$ are defined by
\begin{equation}\label{grobnerindices1}
\begin{aligned}
\nu_1(p,q) & = \theta_1(p,q) + \max \left( 0,q^{-1} - \max( p^{-1},p'^{-1})  \right), \\
\nu_2(p,q) & = \theta_2(p,q) + \min \left( 0,q^{-1} - \min( p^{-1},p'^{-1})  \right) = - \nu_1(p',q').
\end{aligned}
\end{equation}
Since $\nu_1(p,q) \geq \theta_1(p,q)$ and $\nu_2(p,q) \leq \theta_2(p,q)$,
the embeddings \eqref{toftembedding1} improve Gr\"obner's embeddings \eqref{grobnerembedding1}
when $\alpha_1=0$ and $\alpha_2=1$.

\par

We are now in a position to present our main embedding theorem, which is both a sharpening of \eqref{grobnerembedding1} and a generalization of \eqref{toftembedding1} to general $\alpha$-modulation spaces.
In the proof of the theorem we need the following lemma.

\begin{lem}\label{countinglemma}
Suppose $0 \leq \alpha_1 \leq \alpha_2 \leq 1$, $\{ Q_j \}_{j \in J}$ is an $\alpha_1$-covering,
$\{ P_i \}_{i \in I}$ is an $\alpha_2$-covering, and let $\eta _j \in Q_j$ for all $j \in J$, and $\xi _i \in P_i$ for all $i \in I$.
If
\begin{equation}\nonumber
\begin{aligned}
\Omega _i & = \sets {j \in J}{Q_j \cap P_i \neq \emptyset}, \quad i \in I, \\
\Lambda _j & = \sets {i \in I}{Q_j \cap P_i \neq \emptyset}, \quad j \in J,
\end{aligned}
\end{equation}
then
\begin{align}
|\Omega _i| & \lesssim \eabs {\xi _i}^{d(\alpha _2-\alpha _1)}, &  i \in I, \label{sizesubset1} \\
|\Lambda _j| & \lesssim 1, & j \in J, \label{sizesubset2}
\end{align}
and $\eabs{\xi_i} \asymp \eabs{\eta_j}$ for $j \in \Omega_i$ for all $i \in I$, and for $i \in \Lambda_j$ for all $j \in J$.
\end{lem}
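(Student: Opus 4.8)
The plan is to reduce the lemma to two elementary geometric facts about $\alpha$-coverings and then to count by comparing Lebesgue measures, using the bounded overlap \eqref{finiteheight}. First I would record the preliminaries, valid for any $\alpha$-covering $\{Q\}_{Q\in\mathcal Q}$: by \eqref{alphacoveringsize2} one has $r_Q\asymp R_Q$, and since $Q$ contains a ball of radius $r_Q$ and (up to $\varepsilon$) is contained in a ball of radius $R_Q$, comparing volumes with \eqref{alphacoveringsize1} gives $r_Q\asymp R_Q\asymp\eabs{x}^{\alpha}$ for $x\in Q$, whence $\operatorname{diam}Q\lesssim\eabs{x}^{\alpha}$. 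Moreover $\eabs{x}\asymp\eabs{y}$ whenever $x,y$ lie in the same $Q$: for $\alpha>0$ this is immediate from \eqref{alphacoveringsize1}, and for $\alpha=0$ it follows from $\operatorname{diam}Q\lesssim1$ and $\eabs{\cdot}\ge1$. All implied constants here depend only on $n_0$, $K$ and the constants in \eqref{alphacoveringsize1}. These facts apply to $\{Q_j\}_{j\in J}$ with exponent $\alpha_1$ and to $\{P_i\}_{i\in I}$ with exponent $\alpha_2$.

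The comparability $\eabs{\xi_i}\asymp\eabs{\eta_j}$ for $Q_j\cap P_i\neq\emptyset$ is then immediate: picking $z\in Q_j\cap P_i$ gives $\eabs{\eta_j}\asymp\eabs z\asymp\eabs{\xi_i}$ since $z,\eta_j\in Q_j$ and $z,\xi_i\in P_i$, uniformly in $i,j$. Next, for \eqref{sizesubset2} I would fix $j$. Every $P_i$ with $i\in\Lambda_j$ meets $Q_j$ and lies in a ball of radius $R_{P_i}\lesssim\eabs{\xi_i}^{\alpha_2}\asymp\eabs{\eta_j}^{\alpha_2}$, hence $P_i\subseteq\widetilde Q_j:=\{x:\operatorname{dist}(x,Q_j)\le C\eabs{\eta_j}^{\alpha_2}\}$ for a suitable $C$. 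Since $\operatorname{diam}Q_j\lesssim\eabs{\eta_j}^{\alpha_1}\le\eabs{\eta_j}^{\alpha_2}$ (using $\alpha_1\le\alpha_2$ and $\eabs{\eta_j}\ge1$), the set $\widetilde Q_j$ lies in a ball of radius $\lesssim\eabs{\eta_j}^{\alpha_2}$, so $\mu(\widetilde Q_j)\lesssim\eabs{\eta_j}^{\alpha_2 d}$. As $\mu(P_i)\asymp\eabs{\eta_j}^{\alpha_2 d}$ for $i\in\Lambda_j$ and the $P_i$ overlap at most $n_0$-fold by \eqref{finiteheight},
\begin{equation}\nonumber
|\Lambda_j|\,\eabs{\eta_j}^{\alpha_2 d}\lesssim\sum_{i\in\Lambda_j}\mu(P_i)\le n_0\,\mu\Big(\bigcup_{i\in\Lambda_j}P_i\Big)\le n_0\,\mu(\widetilde Q_j)\lesssim\eabs{\eta_j}^{\alpha_2 d},
\end{equation}
which gives $|\Lambda_j|\lesssim1$.

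The bound \eqref{sizesubset1} follows by the symmetric argument with the two coverings interchanged. Fix $i$. Each $Q_j$ with $j\in\Omega_i$ meets $P_i$ and satisfies $\operatorname{diam}Q_j\lesssim\eabs{\eta_j}^{\alpha_1}\asymp\eabs{\xi_i}^{\alpha_1}$, so $Q_j\subseteq\widetilde P_i:=\{x:\operatorname{dist}(x,P_i)\le C\eabs{\xi_i}^{\alpha_1}\}$; since $\operatorname{diam}P_i\lesssim\eabs{\xi_i}^{\alpha_2}$ and $\alpha_1\le\alpha_2$, we get $\mu(\widetilde P_i)\lesssim\eabs{\xi_i}^{\alpha_2 d}$. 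As $\mu(Q_j)\asymp\eabs{\xi_i}^{\alpha_1 d}$ for $j\in\Omega_i$ and the $Q_j$ overlap at most $n_0$-fold,
\begin{equation}\nonumber
|\Omega_i|\,\eabs{\xi_i}^{\alpha_1 d}\lesssim\sum_{j\in\Omega_i}\mu(Q_j)\le n_0\,\mu(\widetilde P_i)\lesssim\eabs{\xi_i}^{\alpha_2 d},
\end{equation}
i.e. $|\Omega_i|\lesssim\eabs{\xi_i}^{d(\alpha_2-\alpha_1)}$.

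The only delicate point is keeping all implied constants uniform in $i$ and $j$, which is precisely what the preliminaries of the first paragraph ensure; the hypothesis $\alpha_1\le\alpha_2$ is used exactly (and only) to absorb the diameter of the ``smaller'' set into the enlargement radius of the ``larger'' one, and I do not anticipate any genuine obstacle beyond this bookkeeping.
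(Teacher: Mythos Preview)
Your proof is correct and follows essentially the same measure-comparison strategy as the paper: enclose the relevant sets in a ball of the right radius and compare volumes. The only cosmetic difference is that the paper invokes the disjointization lemma \cite[Lemma~2.9]{Feichtinger2} to split $J$ into finitely many pairwise-disjoint subfamilies, whereas you use the equivalent fact that \eqref{finiteheight} bounds the pointwise multiplicity by $n_0$, giving $\sum\mu(Q_j)\le n_0\,\mu(\bigcup Q_j)$ directly; this makes your argument slightly more self-contained.
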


\begin{proof}
By the ``disjointization lemma'' \cite[Lemma~2.9]{Feichtinger2}, for any admissible covering $\{ Q_j \}_{j \in J}$ we can split the index set as $J = \bigcup_{k=1}^{n_0} J_k$, where $n_0$ is finite, $\{ J_k \}$ are pairwise disjoint, and
$j, j' \in J_k$, $j \neq j'$ imply $Q_j \cap Q_{j'} = \emptyset$ for $1 \leq k \leq n_0$.

Let $i \in I$. By \eqref{alphacoveringsize1} we have $\mu(Q_j) \asymp \eabs{\xi_i}^{d \alpha_1}$ for all $j \in \Omega_i$.
By \eqref{outermeasure} and \eqref{alphacoveringsize2} we have $P_i \subseteq B(c_i,2 R_2)$ where $R_2^d \lesssim \mu(P_i)$, for some $c_i \in \rr d$.
Let $j \in \Omega_i$ and $x_j \in Q_j \cap P_i$.
Again \eqref{outermeasure}, \eqref{alphacoveringsize1}, \eqref{alphacoveringsize2}
give $Q_j \subseteq B(b_j,2 R_1)$ where $R_1^d \lesssim \eabs{x_j}^{d \alpha_1} \lesssim \eabs{x_j}^{d \alpha_2} \lesssim \mu(P_i) \lesssim R_2^d$, for some $b_j \in \rr d$.
It follows that $Q_j \subseteq B(c_i,C R_2)$ for some $C>0$.
Combining these observations, we obtain for $1 \leq k \leq n_0$
$$
\eabs{\xi_i}^{d \alpha_1} |\Omega _i \cap J_k| \asymp \sum_{j \in \Omega _i \cap J_k} \mu(Q_j)
\leq \mu( B(c_i,C R_2 ) \lesssim \eabs{\xi_i}^{d \alpha_2},
$$
whereupon \eqref{sizesubset1} follows from the disjointization lemma.
The proof of \eqref{sizesubset2} is similar.
The final statement of the lemma is a direct consequence of \eqref{alphacoveringsize1}.
\end{proof}

\begin{thm}\label{alphaembedding}
Let $p,q \in [1,\infty]$, $s \in \ro$ and $0 \leq \alpha_1 \leq \alpha_2 \leq 1$. Then
\begin{equation}\label{newembedding1}
M_{\alpha_2,s + d(\alpha_2-\alpha_1)\theta_1(p,q) }^{p,q}(\rr d)
\subseteq M_{\alpha_1,s}^{p,q} (\rr d)
\subseteq M_{\alpha_2,s + d (\alpha_2-\alpha_1) \theta_2(p,q)}^{p,q}(\rr d),
\end{equation}
and, for some constant $C>0$, it holds for $f\in \mathscr S'(\rr d)$
\begin{equation}\nonumber
C^{-1}\nm f{M^{p,q}_{\alpha _2,s+d(\alpha _2-\alpha _1)\theta _2(p,q)}}
\le \nm f{M^{p,q}_{\alpha _1,s}} \le C \nm f{M^{p,q}_{\alpha _2,s+d(\alpha _2-\alpha _1)
\theta _1(p,q)}}.
\end{equation}
\end{thm}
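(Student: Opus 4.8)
The plan is to prove the two inclusions in \eqref{newembedding1} directly, using the norm equivalence \eqref{normequivalence1} for the Banach frame of Borup and Nielsen together with the combinatorial control provided by Lemma \ref{countinglemma}; the norm inequalities then follow automatically from the closed graph theorem, or more precisely from keeping track of constants throughout. Since the endpoint case $\alpha_2=1$ (Besov spaces) is formally covered by the same BAPU-type definition, I would either treat $0\le\alpha_1\le\alpha_2<1$ with the brushlet frame and then handle $\alpha_2=1$ separately via \eqref{toftembedding1}, or --- cleaner --- work entirely at the level of the BAPU-definition \eqref{alphamodulationnorm}, which makes no reference to $\alpha<1$. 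I will describe the BAPU approach.

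Fix an $\alpha_1$-covering $\{Q_j\}_{j\in J}$ with BAPU $\{\varphi_j\}$ and an $\alpha_2$-covering $\{P_i\}_{i\in I}$ with BAPU $\{\psi_i\}$, and choose $\eta_j\in Q_j$, $\xi_i\in P_i$. The key pointwise/operator identities are the ``almost orthogonality'' relations: because $\sum_j\varphi_j\equiv1$, we have $\psi_i(D)f=\sum_{j\in\Omega_i}\psi_i(D)\varphi_j(D)f$, and symmetrically $\varphi_j(D)f=\sum_{i\in\Lambda_j}\varphi_j(D)\psi_i(D)f$, where $\Omega_i,\Lambda_j$ are the index sets of Lemma \ref{countinglemma}. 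Using that the BAPU functions have $L^1$-bounded inverse Fourier transforms, Young's inequality gives $\|\psi_i(D)\varphi_j(D)f\|_{L^p}\lesssim\|\varphi_j(D)f\|_{L^p}$ and likewise with the roles reversed, uniformly in $i,j$. From here the first (lower) embedding $M^{p,q}_{\alpha_1,s}\subseteq M^{p,q}_{\alpha_2,s+d(\alpha_2-\alpha_1)\theta_2(p,q)}$ and the second (upper) embedding $M^{p,q}_{\alpha_2,s+d(\alpha_2-\alpha_1)\theta_1(p,q)}\subseteq M^{p,q}_{\alpha_1,s}$ are reduced to an inequality between the two mixed sequence-space norms: I must bound $\bigl\|(\,|\Omega_i|,|\Lambda_j|\text{-weighted reorganizations})\,\bigr\|$. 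Concretely, writing $a_j=\eabs{\eta_j}^{s}\|\varphi_j(D)f\|_{L^p}$ and $b_i=\eabs{\xi_i}^{t}\|\psi_i(D)f\|_{L^p}$, one has $b_i\lesssim\eabs{\xi_i}^{t-s}\sum_{j\in\Omega_i}a_j$ and, by the final statement of Lemma \ref{countinglemma}, $\eabs{\xi_i}\asymp\eabs{\eta_j}$ on $\Omega_i$, so the whole matter becomes: for which shift $\tau=t-s$ does the map $(a_j)\mapsto\bigl(\eabs{\xi_i}^\tau\sum_{j\in\Omega_i}a_j\bigr)_i$ map $\ell^q_{\text{(over $i$, after an $\ell^p$ over a dummy fibre)}}$ into itself? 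Since $|\Lambda_j|\lesssim1$, each $a_j$ contributes to boundedly many $b_i$, while each $b_i$ sees $|\Omega_i|\lesssim\eabs{\xi_i}^{d(\alpha_2-\alpha_1)}$ many $a_j$; so after applying Hölder in $j$ over $\Omega_i$ (in the $\ell^p$ then $\ell^q$ structure) the admissible $\tau$ is exactly $d(\alpha_2-\alpha_1)$ times the loss incurred when estimating an $\ell^q$-sum of $N$ terms in $\ell^p$, which is precisely what produces $\theta_1(p,q)$ and $\theta_2(p,q)$. The dual embedding then follows from Proposition \ref{alphaproperties}(ii) together with $\theta_2(p,q)=-\theta_1(p',q')$ as in \eqref{indices1}, or can be redone symmetrically.

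The main obstacle is the last counting/interpolation step: getting the \emph{sharp} exponent $d(\alpha_2-\alpha_1)\theta_i(p,q)$ rather than a cruder one like Gröbner's $\nu_i$. This requires care with the order of the Hölder inequalities --- one must interpolate between the case $p=q$ (where no loss occurs and the factor is $0$), the case $q\le p$ and $q\le p'$ (loss $q^{-1}-\min(p^{-1},p'^{-1})$), and exploit the $\ell^p\hookrightarrow\ell^q$ or $\ell^q\hookrightarrow\ell^p$ inclusions on each fibre $\Omega_i$, whose length is $\asymp\eabs{\xi_i}^{d(\alpha_2-\alpha_1)}$, so that the fibre-length enters with the correct power. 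A clean way to organize this is to first establish the unweighted case $s=0$, $t=d(\alpha_2-\alpha_1)\theta_i(p,q)$ by complex interpolation from the corner cases $(p,q)\in\{(1,\infty),(\infty,1),(p,p)\}$ using Proposition \ref{alphaproperties}(iii), and then transfer weights using the fact that $\eabs{D}^\sigma$ acts as an isomorphism $M^{p,q}_{\alpha,s}\to M^{p,q}_{\alpha,s-\sigma}$ (which reduces to the multiplier estimate $\|\varphi_j(D)\eabs{D}^\sigma f\|_{L^p}\asymp\eabs{\eta_j}^\sigma\|\varphi_j(D)f\|_{L^p}$ on each dyadic-type block). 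I expect the bookkeeping of constants to be routine once the corner cases and the counting Lemma \ref{countinglemma} are in hand, so the norm inequalities stated after \eqref{newembedding1} come for free from the same estimates.
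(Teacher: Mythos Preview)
Your overall architecture---BAPU decomposition via $\psi_i(D)f=\sum_{j\in\Omega_i}\psi_i(D)\varphi_j(D)f$, Young's inequality, Lemma \ref{countinglemma}, a handful of corner cases, complex interpolation (Proposition \ref{alphaproperties}(iii)), then duality via $\theta_2(p,q)=-\theta_1(p',q')$---is exactly the paper's route. The gap is in the corners. Your assertion that the diagonal $p=q$ carries ``no loss'' is false: for $p>2$ one has $\theta_2(p,p)=2/p-1<0$, and indeed $M^{p}_{\alpha_1,s}\not\subseteq M^{p}_{\alpha_2,s}$ when $\alpha_1<\alpha_2$ (this is ruled out by Corollary \ref{sharpnessresult}). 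With only $(1,\infty)$, $(\infty,1)$ and a zero-shift diagonal as anchors, interpolation cannot recover the sharp $\theta_2$; and the direct H\"older estimate over $\Omega_i$ that you describe yields only $t\le s-d(\alpha_2-\alpha_1)/q'$, which is sharp precisely when $p\in\{1,\infty\}$.

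What is missing is the $p=2$ information, which captures the kink of $\theta_2$ along $1/p=1/2$. The paper supplies it in two pieces: the identification $M^{2}_{\alpha_1,s}=M^{2}_{\alpha_2,s}=H_s$ from Lemma \ref{alphasobolev}, and a Plancherel argument (using the pointwise lower bound \eqref{lowerbound1} for $\sum_j\varphi_j^2$, not merely Young's inequality) giving $M^{2,\infty}_{\alpha_1,s}\subseteq M^{2,\infty}_{\alpha_2,\,s-d(\alpha_2-\alpha_1)/2}$. Together with the four Young-inequality corners $(1,1),(1,\infty),(\infty,1),(\infty,\infty)$---which your method does deliver---these six anchor points reproduce the piecewise-affine function $\theta_2$ exactly on the $(1/p,1/q)$ square, and interpolation plus duality finishes the proof. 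So your plan is correct in shape but needs the two $L^2$-based corners in place of the erroneous diagonal claim.
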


\begin{proof}
By duality it suffices to prove the right hand side embedding.
Let $s \in \ro$, let $\{ \fy _j \}$ be an $\alpha _1$-BAPU such that $\fy _j \geq 0$ for all $j$, let $\{ \psi _i \}$ be an
$\alpha _2$-BAPU such that $\psi _i \geq 0$ for all $i$, let $\eta_j \in \supp \varphi _j$ for all $j$, and let $\xi_i \in \supp \psi _i$ for all $i$.
If
\begin{align}
\Omega_i & = \sets {j}{\supp \fy _j \cap \supp \psi _i\neq \emptyset} \nonumber \\
\Lambda_j & = \sets {i}{\supp \fy _j \cap \supp \psi _i\neq \emptyset} \nonumber \\
\end{align}
then by Lemma \ref{countinglemma}
\begin{align}
|\Omega _i| & \lesssim \eabs {\xi _i}^{d(\alpha _2-\alpha _1)} & \mbox{for all} \quad i, \nonumber \\
|\Lambda_j| & \lesssim 1 & \mbox{for all} \quad j, \nonumber
\end{align}
and $\eabs{\xi_i} \asymp \eabs{\eta_j}$ for $j \in \Omega_i$ for all $i$, and for $i \in \Lambda_j$ for all $j$.
This gives, using \eqref{lowerbound1},
\begin{equation}\nonumber
\begin{aligned}
\nm {\psi _i(D)f}{L^2}^2\eabs {\xi _i}^{2 s - d(\alpha _2-\alpha _1)}
& =
\nm {\psi _i\widehat f}{L^2}^2\eabs {\xi _i}^{2 s - d(\alpha _2-\alpha _1)}
\\
& \lesssim \sum _{j\in \Omega _i}\int \fy _j^2(\xi )\psi _i^2(\xi )|\widehat f(\xi )|^2\, d\xi
\eabs {\xi _i}^{2 s - d(\alpha _2-\alpha _1)}
\\
& \leq \sum _{j\in \Omega _i}\int \fy _j^2(\xi )|\widehat f(\xi )|^2\, d\xi
\eabs {\xi _i}^{2 s - d(\alpha _2-\alpha _1)}
\\
& \lesssim \eabs {\xi _i}^{d(\alpha _2-\alpha _1)}\sup _{j\in \Omega _i}
\nm {\fy _j\widehat f}{L^2}^2 \eabs {\xi _i}^{2 s - d(\alpha _2-\alpha _1)}
\\
& = \sup _{j\in \Omega _i}\nm {\fy _j(D)f}{L^2}^2 \eabs {\eta _j}^{2 s}.
\end{aligned}
\end{equation}
Taking the supremum over $i$ we obtain
$$
\nm f{M^{2,\infty}_{\alpha _2,s - d(\alpha _2-\alpha _1)/2}} \lesssim \nm f{M^{2,\infty }_{\alpha _1,s}},
$$
which proves the embedding
\begin{equation}\label{emb2infty}
M^{2,\infty }_{\alpha _1,s}(\rr d)\subseteq M^{2,\infty}_{\alpha _2,s - d(\alpha _2-\alpha _1)/2}(\rr d).
\end{equation}
Next we observe that Young's inequality and \eqref{bapu3} for $\{ \psi _i \}$ gives, for all $i$ and any $p \in [1,\infty]$,
\begin{equation}\label{Lpestimate1}
\| \psi_i(D) f \|_{L^p}
= \left\| \sum_{j \in \Omega_i} \mathscr F^{-1} \left( \psi_i \varphi_j \wh f \right) \right\|_{L^p} \\
\lesssim \sum_{j \in \Omega_i} \left\| \varphi_j (D) f \right\|_{L^p}.
\end{equation}
This gives
\begin{equation}\nonumber
\begin{aligned}
\| f \|_{M_{\alpha_2,s}^{1}} & = \sum_{i} \eabs{\xi_i}^s \| \psi_i(D) f \|_{L^1}
\lesssim \sum_{i} \sum_{j \in \Omega_i} \eabs{\xi_i}^s \left\| \varphi_j (D) f \right\|_{L^1} \\
& \asymp \sum_{i} \sum_{j \in \Omega_i} \eabs{\eta_j}^s \left\| \varphi_j (D) f \right\|_{L^1}
= \sum_{j} \sum_{i \in \Lambda_j} \eabs{\eta_j}^s \left\| \varphi_j (D) f \right\|_{L^1} \\
& \lesssim \| f \|_{M_{\alpha_1,s}^{1}},
\end{aligned}
\end{equation}
which proves the embedding
\begin{equation}\label{emb11}
M^{1}_{\alpha _1,s}(\rr d) \subseteq M^{1}_{\alpha _2,s}(\rr d).
\end{equation}
We also obtain from \eqref{Lpestimate1}
\begin{equation}\nonumber
\begin{aligned}
\| f \|_{M_{\alpha_2,s-d(\alpha_2-\alpha_1)}^{1,\infty}} & = \sup_{i} \eabs{\xi_i}^{s-d(\alpha_2-\alpha_1)} \| \psi_i(D) f \|_{L^1} \\
& \lesssim \sup_{i} \sum_{j \in \Omega_i} \eabs{\xi_i}^{-d(\alpha_2-\alpha_1)} \eabs{\eta_j}^s \left\| \varphi_j (D) f \right\|_{L^1}
\lesssim \| f \|_{M_{\alpha_1,s}^{1,\infty}},
\end{aligned}
\end{equation}
which proves the embedding
\begin{equation}\label{emb1infty}
M^{1,\infty}_{\alpha _1,s}(\rr d) \subseteq M^{1,\infty}_{\alpha _2,s -d(\alpha_2-\alpha_1) }(\rr d).
\end{equation}
Again \eqref{Lpestimate1} gives
\begin{equation}\nonumber
\begin{aligned}
\| f \|_{M_{\alpha_2,s}^{\infty,1}} & = \sum_{i} \eabs{\xi_i}^{s} \| \psi_i(D) f \|_{L^\infty}
\lesssim \sum_{i} \sum_{j \in \Omega_i} \eabs{\eta_j}^s \left\| \varphi_j (D) f \right\|_{L^\infty} \\
& = \sum_{j} \sum_{i \in \Lambda_i} \eabs{\eta_j}^s \left\| \varphi_j (D) f \right\|_{L^\infty}
\lesssim \| f \|_{M_{\alpha_1,s}^{\infty,1}},
\end{aligned}
\end{equation}
which proves the embedding
\begin{equation}\label{embinfty1}
M^{\infty,1}_{\alpha _1,s}(\rr d) \subseteq M^{\infty,1}_{\alpha _2,s }(\rr d).
\end{equation}
Finally \eqref{Lpestimate1} gives
\begin{equation}\nonumber
\begin{aligned}
\| f \|_{M_{\alpha_2,s-d(\alpha_2-\alpha_1)}^{\infty}}
& = \sup_{i} \eabs{\xi_i}^{s -d(\alpha_2-\alpha_1)} \| \psi_i(D) f \|_{L^\infty} \\
& \lesssim \sup_{i} \sum_{j \in \Omega_i} \eabs{\xi_i}^{-d(\alpha_2-\alpha_1)} \eabs{\eta_j}^s \left\| \varphi_j (D) f \right\|_{L^\infty} \\
& \lesssim \| f \|_{M_{\alpha_1,s}^{\infty}},
\end{aligned}
\end{equation}
which proves the embedding
\begin{equation}\label{embinftyinfty}
M^{\infty}_{\alpha _1,s}(\rr d) \subseteq M^{\infty}_{\alpha _2,s - d(\alpha_2-\alpha_1)}(\rr d).
\end{equation}
By Lemma \ref{alphasobolev} we have
\begin{equation}\label{emb22}
M^{2}_{\alpha _1,s}(\rr d) = M^{2}_{\alpha _2,s}(\rr d).
\end{equation}
The result now follws from interpolation between \eqref{emb2infty}, \eqref{emb11}, \eqref{emb1infty}, \eqref{embinfty1}, \eqref{embinftyinfty} and \eqref{emb22}, and duality.
\end{proof}

\section{Sharpness of the embeddings}

The notion of $\alpha$-covering is connected with the metric calculus presented in \cite[Section 18.4]{Horm3}.
Let $0\le \alpha \le 1$, and let $g$ be the Riemannian metric
$$
g_\eta (\xi )=\frac {|\xi |^2}{\eabs \eta ^{2\alpha}}.
$$
If $0<r<1$ then it follows by straight-forward considerations that
$$
g_\eta (\xi -\eta ) \leq r^2\quad \Longrightarrow \quad C^{-1}g_\eta (\zeta )\le g_\xi (\zeta ) \le Cg_\eta (\zeta ),
\quad \zeta \in \rr d,
$$
for some constant $C$ which depends on $r$ only. Hence $g$ is a slowly varying metric
in the sense of \cite[Def.~18.4.1]{Horm3}, and (18.4.2) in \cite{Horm3} is satisfied with $c=r^2$.
The results in \cite{Horm3} gives the following proposition.

\begin{prop}\label{metricpartition}
Let $0\le \alpha \le 1$ and $0<r<1$. The following holds.
\begin{enumerate}
\item[{\rm{(i)}}]
For some sequence $\{ \xi _i\} _{i\in I}\subseteq \rr d$, the balls $B_i =
B(\xi _i,r\eabs  {\xi _i}^\alpha /2)$
constitute an $\alpha$-covering.

\vrum

\item[{\rm{(ii)}}]
There are functions
$\psi _i \in C_c^\infty (\rr d)$, $i \in I$, such that $\supp \psi _i \subseteq B_i$,
$0\le \psi _i\le 1$, $\sum _{i\in I}\psi _i =1$, and for every multiindex $\beta$, there is a
finite constant $C_\beta>0$ such that
\begin{equation}\label{metricderiv1}
\sup _{i\in I}\Big ( \eabs {\xi _i}^{\alpha |\beta |}\nm {\partial ^\beta \psi _i}{L^\infty}\Big ) \le C_\beta \text .
\end{equation}

\vrum

\item[{\rm{(iii)}}]
If $\mathcal Q = \{ B_i \}_{i \in I}$ then $\{ \psi_i \}_{i \in I}$ is a $\mathcal Q$-BAPU.
\end{enumerate}
\end{prop}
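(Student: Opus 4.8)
The plan is to read the whole statement off Hörmander's partition-of-unity construction for slowly varying metrics, applied to the metric $g$, whose slow variation was just verified (with the constant $r^2$, and hence also with $(r/2)^2$, in \cite[(18.4.2)]{Horm3}). Invoking the results of \cite[Section~18.4]{Horm3} for the slowly varying metric $g$, normalized so that the covering balls have $g$-radius $r/2$ — equivalently Euclidean radius $r\eabs{\xi_i}^\alpha/2$ — I would obtain a sequence $\{\xi_i\}_{i\in I}\subseteq\rr d$ and functions $\psi_i\in C_c^\infty(\rr d)$ such that: the balls $B_i=B(\xi_i,r\eabs{\xi_i}^\alpha/2)$ cover $\rr d$ with a uniform bound $N$ on the number of them containing any given point; $0\le\psi_i\le1$, $\sum_{i\in I}\psi_i=1$, $\supp\psi_i\subseteq B_i$; and the derivatives of $\psi_i$ are controlled by the metric, i.e. $|\partial^\beta\psi_i(\xi)|\le C_\beta\eabs{\xi_i}^{-\alpha|\beta|}$ for every multiindex $\beta$, with $C_\beta$ independent of $i$. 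Multiplying by $\eabs{\xi_i}^{\alpha|\beta|}$ and taking the supremum over $i$ is exactly \eqref{metricderiv1}, so (ii) is done, and the properties $0\le\psi_i\le1$, $\sum\psi_i=1$, $\supp\psi_i\subseteq B_i$ are then in hand for (iii).

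For (i) I would check that $\mathcal Q=\{B_i\}_{i\in I}$ is an $\alpha$-covering in the sense of Definition~\ref{alphacovering}. Condition \eqref{alphacoveringsize2} is immediate with $K=1$, each $B_i$ being a ball. For \eqref{alphacoveringsize1}, $\mu(B_i)=\mu(B(0,1))(r/2)^d\eabs{\xi_i}^{\alpha d}$, while the slow-variation implication displayed above (applicable since $g_{\xi_i}(\xi-\xi_i)\le (r/2)^2<r^2$ for $\xi\in B_i$) gives $\eabs{\xi}\asymp\eabs{\xi_i}$ for $\xi\in B_i$, hence $\mu(B_i)\asymp\eabs{\xi}^{\alpha d}$ for $\xi\in B_i$, uniformly in $i$. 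The covering property is part of Hörmander's conclusion, and the admissibility bound \eqref{finiteheight} follows from the uniform overlap bound $N$ together with \eqref{alphacoveringsize1}: if $B_{i'}\cap B_i\neq\emptyset$, then slow variation gives $\eabs{\xi_{i'}}\asymp\eabs{\xi_i}$ and $B_{i'}$ is contained in a fixed dilate $\wt B_i$ of $B_i$, so $\sum_{i':\,B_{i'}\cap B_i\neq\emptyset}\mu(B_{i'})\le N\,\mu(\wt B_i)\lesssim\mu(B_i)$; since every summand is $\asymp\mu(B_i)$, the number of such indices $i'$ is $\lesssim1$ uniformly. This proves (i).

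For (iii) only the BAPU bound \eqref{bapu3} remains, the other requirements being (i) and the already noted facts $\supp\psi_i\subseteq B_i$, $\sum\psi_i=1$. Set $\lambda_i=\eabs{\xi_i}^\alpha$ and $\phi_i(\zeta)=\psi_i(\xi_i+\lambda_i\zeta)$; by (ii), $\phi_i$ is supported in the fixed ball $B(0,r/2)$ and $\|\partial^\beta\phi_i\|_{L^\infty}=\lambda_i^{|\beta|}\|\partial^\beta\psi_i\|_{L^\infty}\le C_\beta$ uniformly in $i$. The change of variables $\xi=\xi_i+\lambda_i\zeta$ gives $\mathscr F\psi_i(x)=\lambda_i^d e^{-ix\cdot\xi_i}\mathscr F\phi_i(\lambda_i x)$, hence $\|\mathscr F\psi_i\|_{L^1}=\|\mathscr F\phi_i\|_{L^1}$. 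Since each $\phi_i$ is supported in a fixed ball with derivatives up to order $d+1$ bounded uniformly, integration by parts yields $|\mathscr F\phi_i(\zeta)|\lesssim\eabs\zeta^{-d-1}$ with constant independent of $i$, so $\sup_i\|\mathscr F\phi_i\|_{L^1}<\infty$, which is \eqref{bapu3}. Thus $\{\psi_i\}$ is a $\mathcal Q$-BAPU and (iii) holds.

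The one slightly delicate point is fitting Hörmander's abstract construction to the precise normalization demanded here — in particular, arranging that the partition functions be supported in the undoubled balls $B_i=B(\xi_i,r\eabs{\xi_i}^\alpha/2)$ rather than merely in some fixed dilate of them; this is handled by choosing the scale parameter in the construction appropriately. Once that bookkeeping is in place, the remainder is the routine volume count in (i) and the elementary dilation/integration-by-parts estimate in (iii).
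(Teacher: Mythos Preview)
Your proposal is correct and follows essentially the same route as the paper: (i) and (ii) are read off H\"ormander's slowly-varying-metric partition (the paper cites \cite[Lemma~18.4.4]{Horm3} with a small enough parameter), and (iii) is reduced to $\sup_i\|\mathscr F\psi_i\|_{L^1}<\infty$, which both you and the paper obtain by rescaling $\psi_i$ to a fixed ball and integrating by parts. The only organizational difference is that the paper isolates this last estimate as a separate lemma stated for all $L^p$ (needed later in the sharpness arguments), whereas you prove the $p=1$ case inline.
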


\begin{proof}
(i) and (ii) follow immediately from \cite[Lemma 18.4.4]{Horm3} with $\ep < 1/8$.
Therefore, in order to prove (iii) it suffices to show
\begin{equation}\nonumber
\sup_{i \in I} \| \mathscr F \psi_i \|_{L^1} < \infty,
\end{equation}
which is a special case of the following Lemma \ref{fourierLp1}.
\end{proof}

\begin{lem}\label{fourierLp1}
Let $0\le \alpha \le 1$ and suppose $\{ \psi_i \}_{i \in I} \subseteq C_c^\infty(\rr d)$ is a family of functions such that $\supp \psi_i \subseteq B(\xi _i,r\eabs  {\xi _i}^\alpha )$, $i \in I$, for some sequence $\{ \xi_i \}_{i \in I} \subseteq \rr d$ and some $r>0$, and for any multiindex $\beta$ there is $C_\beta>0$ such that
\begin{equation}\label{metricderiv2}
\sup _{i\in I}\Big ( \eabs {\xi _i}^{\alpha |\beta |}\nm {\partial ^\beta \psi _i}{L^\infty}\Big ) \le C_\beta \text .
\end{equation}
Then for $p \in [1,\infty]$ there is a constant $C_p>0$ such that
\begin{equation}\nonumber
\sup_{i \in I} \eabs{\xi_i}^{-d \alpha/p'} \| \mathscr F \psi_i \|_{L^p} \leq C_p.
\end{equation}
\end{lem}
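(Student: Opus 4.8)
The plan is to reduce the estimate to a fixed compact set by an anisotropic rescaling adapted to the ball $B(\xi_i,r\eabs{\xi_i}^\alpha)$. Put $\lambda_i = \eabs{\xi_i}^\alpha$ (note $\lambda_i \ge 1$) and define the rescaled functions
\[
\widetilde\psi_i(\eta) = \psi_i(\xi_i + \lambda_i \eta), \qquad \eta \in \rr d .
\]
Then $\supp \widetilde\psi_i \subseteq B(0,r)$ for all $i$, and since $\partial^\beta \widetilde\psi_i(\eta) = \lambda_i^{|\beta|}(\partial^\beta\psi_i)(\xi_i+\lambda_i\eta)$, the hypothesis \eqref{metricderiv2} gives $\sup_{i\in I}\nm{\partial^\beta \widetilde\psi_i}{L^\infty} \le C_\beta$ for every multiindex $\beta$. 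Thus $\{\widetilde\psi_i\}_{i\in I}$ is a family of smooth functions supported in the single fixed ball $B(0,r)$, with uniformly bounded $C^N$-norms for every $N$.

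Next I would record the scaling identity for the Fourier transform. From $\psi_i(\xi) = \widetilde\psi_i\big((\xi-\xi_i)/\lambda_i\big)$ and a change of variables one gets $\mathscr F\psi_i(x) = \lambda_i^d\, e^{-ix\cdot\xi_i}\, \mathscr F\widetilde\psi_i(\lambda_i x)$, hence $|\mathscr F\psi_i(x)| = \lambda_i^d |\mathscr F\widetilde\psi_i(\lambda_i x)|$. Raising to the $p$-th power, integrating, and substituting $y=\lambda_i x$ yields, for $p\in[1,\infty)$,
\[
\nm{\mathscr F\psi_i}{L^p} = \lambda_i^{d/p'}\,\nm{\mathscr F\widetilde\psi_i}{L^p},
\]
and the same relation (with $p'=1$) for $p=\infty$. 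Since $\lambda_i^{d/p'} = \eabs{\xi_i}^{\alpha d/p'}$, this gives $\eabs{\xi_i}^{-d\alpha/p'}\nm{\mathscr F\psi_i}{L^p} = \nm{\mathscr F\widetilde\psi_i}{L^p}$, so the lemma reduces to proving the uniform bound $\sup_{i\in I}\nm{\mathscr F\widetilde\psi_i}{L^p} < \infty$.

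Finally I would establish that uniform bound from the uniform Schwartz estimates on the fixed ball. For $p=\infty$ it is immediate: $\nm{\mathscr F\widetilde\psi_i}{L^\infty} \le (2\pi)^{-d/2}\nm{\widetilde\psi_i}{L^1} \le (2\pi)^{-d/2}\mu(B(0,r))\,\sup_{i}\nm{\widetilde\psi_i}{L^\infty} \lesssim 1$. For $p<\infty$ I would use integration by parts in the form $\eabs{x}^{2N}\mathscr F\widetilde\psi_i(x) = \mathscr F\big((1-\Delta)^N\widetilde\psi_i\big)(x)$, which gives $|\mathscr F\widetilde\psi_i(x)| \le (2\pi)^{-d/2}\eabs{x}^{-2N}\nm{(1-\Delta)^N\widetilde\psi_i}{L^1} \le C_N\eabs{x}^{-2N}$ with $C_N$ independent of $i$, since it depends only on $d$, $r$, $N$ and the constants $C_\beta$ for $|\beta|\le 2N$. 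Choosing $N$ so large that $2Np>d$ makes $x\mapsto\eabs{x}^{-2N}$ lie in $L^p(\rr d)$, whence $\nm{\mathscr F\widetilde\psi_i}{L^p} \le C_N\nm{\eabs{\cdot}^{-2N}}{L^p} =: C_p$ uniformly in $i$, and (iii) of Proposition \ref{metricpartition} is the case $p=1$. The argument is essentially bookkeeping around the rescaling, so there is no serious obstacle; the single point to get right is that $\{\widetilde\psi_i\}$ is genuinely uniformly bounded in every $C^N$-norm on a common compact set, which is exactly the content of \eqref{metricderiv2}.
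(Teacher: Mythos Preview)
Your proof is correct and follows essentially the same route as the paper: rescale via $\widetilde\psi_i(\eta)=\psi_i(\xi_i+\eabs{\xi_i}^\alpha\eta)$ to obtain a family with uniform $C^N$-bounds supported in the fixed ball $B(0,r)$, bound $\nm{\mathscr F\widetilde\psi_i}{L^p}$ uniformly by integration by parts (choosing $2Np>d$), and recover the claim from the scaling identity $\nm{\mathscr F\psi_i}{L^p}=\eabs{\xi_i}^{d\alpha/p'}\nm{\mathscr F\widetilde\psi_i}{L^p}$. The only cosmetic difference is that the paper writes the integration-by-parts step directly inside the $L^p$-integral rather than first deducing the pointwise decay $|\mathscr F\widetilde\psi_i(x)|\le C_N\eabs{x}^{-2N}$.
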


\begin{proof}
Set
$$
\varphi_i(\xi) = \psi_i( \eabs  {\xi _i}^\alpha \xi + \xi_i), \quad i \in I.
$$
Then $\supp \varphi_i \subseteq B(0,r)$ for all $i \in I$, and \eqref{metricderiv2} gives
$\nm {\partial ^\beta \varphi _i}{L^\infty} \leq C_\beta$ for all $i \in I$.
If $p<\infty$ and $n>d/(2 p)$ is an integer then integration by parts gives, for some constants $c_\beta$,
\begin{equation}\nonumber
\begin{aligned}
\| \mathscr F \varphi_i \|_{L^p}^p
& = (2 \pi)^{-dp/2} \int_{\rr d} \eabs{x}^{-2 n p} \left| \int_{\rr d} \varphi_i(\xi) \eabs{x}^{2n} e^{-i x \cdot \xi} d \xi \right|^p dx \\
& = (2 \pi)^{-dp/2} \int_{\rr d} \eabs{x}^{-2 n p} \left| \sum_{|\beta| \leq 2n} c_\beta \int_{\rr d} \partial ^\beta \varphi_i(\xi) e^{-i x \cdot \xi} d \xi \right|^p dx \\
& \lesssim \int_{\rr d} \eabs{x}^{-2 n p} \left( \sum_{|\beta| \leq 2n} \| \partial ^\beta \varphi_i \|_{L^1} \right)^p dx
\lesssim 1
\end{aligned}
\end{equation}
for all $i \in I$.
If $p = \infty$ the observations above give $\| \mathscr F \varphi_i \|_{L^\infty} \leq (2 \pi)^{-d/2} \| \varphi_i \|_{L^1} \lesssim 1$ for all $i \in I$.
The result now follows from $\| \mathscr F \psi_i \|_{L^p} = \eabs{\xi_i}^{d \alpha/p'} \| \mathscr F \varphi_i \|_{L^p}$.
\end{proof}

Given an $\alpha$-covering and an $\alpha$-BAPU according to Proposition \ref{metricpartition},
the next lemma says that
we may adjoin a sequence of balls to the covering, and modify the BAPU accordingly,
without destroying the $\alpha$-covering and the $\alpha$-BAPU properties.
A function indexed by the new index set
equals one on a ball of radius proportional to $\eabs{\xi _j}^\alpha$ where $\xi _j$ is the center of the support of the function.
This will be useful in the proofs of the forthcoming sharpness results Propositions \ref{counterp1} and \ref{counterp2}.

\begin{lem}\label{extfamBi}
Let $0\le \alpha \le 1$, $0<r<1$, and let $\{ B_i \} _{i\in I}$ and $\{  \psi _i \} _{i\in I}$ be
as in Proposition \ref{metricpartition}.
Let $J$ be a countable index set such that $I\cap J=\emptyset$,
and let $\{ B_j \} _{j\in J}$ be balls such that
$B_j = B(\xi _j,r\eabs {\xi _j}^\alpha /2)$ where $\xi _j\in \rr d$ for $j\in J$,
and $B_j\cap B_k =\emptyset$, when $j,k\in J$ and $j\neq k$.

Then there are functions $\fy _i \in C_c^\infty (\rr d)$, $i\in I\cup J$, such that the following is true:

\begin{enumerate}
\item[{\rm{(i)}}] $0\le \fy _i\le 1$, $\supp \fy _i \subseteq B_i$ when $i\in I\cup  J$;

\vrum

\item[{\rm{(ii)}}] $\fy _j =1$ on $B(\xi _j,r \eabs {\xi _j }^\alpha /4)$ for $j\in J$;

\vrum

\item[{\rm{(iii)}}] $\{ \fy _i \}_{i \in I \cup J}$ is an $\alpha$-BAPU, and for each multiindex $\beta$ there exists $C_\beta>0$ such that
\begin{equation}\label{fyests1}
\sup_{i \in I \cup J} \Big ( \eabs {\xi _i}^{\alpha |\beta |}\nm {\partial ^\beta \fy _i}{L^\infty}\Big ) \le C_\beta.
\end{equation}
\end{enumerate}
\end{lem}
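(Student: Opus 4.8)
The plan is to construct the new partition functions $\fy_i$ for $i \in I \cup J$ by a straightforward gluing argument, taking advantage of the fact that the balls $\{B_j\}_{j \in J}$ are mutually disjoint and have the same geometric scale as the original $\alpha$-covering balls. First I would fix, for each $j \in J$, a bump function $\chi_j \in C_c^\infty(\rr d)$ with $0 \le \chi_j \le 1$, $\chi_j = 1$ on $B(\xi_j, r\eabs{\xi_j}^\alpha/4)$ and $\supp \chi_j \subseteq B(\xi_j, r\eabs{\xi_j}^\alpha/2) = B_j$; these are obtained by dilating and translating a single fixed bump function, exactly as in the construction of $\varphi_i$ in the proof of Lemma \ref{fourierLp1}, so that the rescaled derivative bounds $\eabs{\xi_j}^{\alpha|\beta|} \nm{\partial^\beta \chi_j}{L^\infty} \le C_\beta$ hold uniformly in $j$. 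Because the $B_j$ are pairwise disjoint, the sum $\chi = \sum_{j \in J} \chi_j$ is locally finite (in fact at most one term is nonzero at any point), satisfies $0 \le \chi \le 1$, and inherits the uniform rescaled derivative bounds from the $\chi_j$.

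Next I would define the modified functions by
\begin{equation}\nonumber
\fy_i = (1-\chi)\psi_i \quad (i \in I), \qquad \fy_j = \chi_j \quad (j \in J).
\end{equation}
Then (i) is immediate: $0 \le \fy_i \le 1$ since $0 \le 1-\chi \le 1$ and $0 \le \psi_i \le 1$, and $\supp \fy_i \subseteq \supp \psi_i \subseteq B_i$; for $j \in J$, $0 \le \fy_j = \chi_j \le 1$ with $\supp \chi_j \subseteq B_j$. Property (ii) holds by the choice of $\chi_j$. For the partition of unity property,
\begin{equation}\nonumber
\sum_{i \in I \cup J} \fy_i = (1-\chi)\sum_{i \in I} \psi_i + \sum_{j \in J} \chi_j = (1-\chi) \cdot 1 + \chi = 1,
\end{equation}
using $\sum_{i \in I} \psi_i = 1$ from Proposition \ref{metricpartition}(ii). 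For the derivative estimate \eqref{fyests1} with $i \in J$ it is the uniform bound on $\chi_j$; with $i \in I$ it follows from the Leibniz rule applied to $(1-\chi)\psi_i$, combined with the uniform rescaled bounds on $\psi_i$ (from \eqref{metricderiv1}) and on $\chi$, noting that on $\supp \psi_i \subseteq B_i$ one has $\eabs{\xi} \asymp \eabs{\xi_i}$ and that any $B_j$ meeting $B_i$ has $\eabs{\xi_j} \asymp \eabs{\xi_i}$ by the slow variation of the metric $g$, so the weights from the $\chi_j$ factors are comparable to $\eabs{\xi_i}$. Finally, the $\alpha$-covering property of $\{B_i\}_{i \in I \cup J}$ reduces to checking the finite-height condition \eqref{finiteheight}: the original covering has bounded height, each $B_j$ ($j \in J$) is disjoint from the others in $J$, and by slow variation of $g$ together with the volume comparison $\mu(B_j) \asymp \eabs{\xi_j}^{\alpha d}$, only a bounded number of the $B_i$ ($i \in I$) can meet a given $B_j$ and vice versa; then Lemma \ref{fourierLp1} applied to $\{\fy_i\}_{i \in I \cup J}$ (whose hypotheses are exactly \eqref{fyests1}) gives $\sup_i \| \mathscr F \fy_i \|_{L^1} < \infty$, so $\{\fy_i\}$ is an $\alpha$-BAPU.

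The main obstacle, and the only point requiring real care, is the finite-height bound \eqref{finiteheight} for the enlarged covering — specifically controlling how many of the adjoined balls $B_j$ can cluster near a single original ball $B_i$, and conversely. This is where one must invoke the slow variation of the metric $g_\eta(\xi) = |\xi|^2/\eabs{\eta}^{2\alpha}$: if $B_j$ and $B_i$ intersect then $\eabs{\xi_j} \asymp \eabs{\xi_i}$, so all such $B_j$ have comparable radii $\asymp \eabs{\xi_i}^\alpha$ and lie in a fixed dilate of $B_i$; since they are pairwise disjoint with volume $\asymp \eabs{\xi_i}^{\alpha d}$, a volume-packing argument (identical in spirit to the count in Lemma \ref{countinglemma}) bounds their number uniformly. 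Everything else — the Leibniz-rule derivative bounds and the partition-of-unity identity — is routine once the geometry is in place.
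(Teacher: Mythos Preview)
Your proposal is correct and follows essentially the same approach as the paper: the paper defines $\fy_j$ for $j\in J$ exactly as your $\chi_j$, and sets $\fy_i=\psi_i\prod_{j\in J}(1-\fy_j)$ for $i\in I$, which coincides with your $(1-\chi)\psi_i$ since the $B_j$ are pairwise disjoint (so at most one factor in the product differs from $1$ at any point). Your write-up in fact supplies more detail than the paper on the Leibniz-rule estimate and the finite-height verification, both of which the paper dismisses as ``straightforward'' or ``easily verified''.
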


\begin{proof}
Let $\varphi \in C_c^\infty (\rr d)$, $0 \leq \varphi \leq 1$, $\supp \varphi \subseteq B(0,r/2)$ and
$\varphi(\xi) = 1$ for $\xi \in B(0,r/4)$.
We set
\begin{alignat*}{3}
\varphi _j(\xi ) &= \varphi (\eabs {\xi _j}^{-\alpha}(\xi -\xi _j))& \quad &\text{for}&\quad j&\in J
\intertext{and}
\varphi _i(\xi ) &= \psi _i(\xi )\prod _{j\in J}(1-\varphi _j(\xi ))& \quad &\text{for}&\quad i&\in I.
\end{alignat*}
Then properties (i) and (ii) are satisfied.
The estimate $\sup_{j \in J} \eabs {\xi _j}^{\alpha |\beta |}\nm {\partial ^\beta \fy _j}{L^\infty} < C_\beta$ for any multiindex $\beta$ follows immediately.
These estimates combined with \eqref{metricderiv1} and straightforward considerations give
$\sup_{i \in I} \eabs {\xi _i}^{\alpha |\beta |}\nm {\partial ^\beta \fy _i}{L^\infty} < C_\beta$ for all multiindices $\beta$.
Thus \eqref{fyests1} holds for all multiindices $\beta$.
Likewise one can easily verify
\begin{equation}\nonumber
\sum _{i \in I\cup J}\fy _i (\xi) = 1 \quad \forall \xi \in \rr d,
\end{equation}
as well as the fact that $\{ B_i, B_j \}_{i \in I, j \in J}$ is an admissible $\alpha$-covering.
To prove (iii) it thus suffices to observe that $\sup_{j \in J} \| \mathscr F \varphi_j \|_{L^1} < \infty$ follows from $\| \mathscr F \varphi_j \|_{L^1} = \| \mathscr F \varphi \|_{L^1}$, and that $\sup_{i \in I} \| \mathscr F \varphi_i \|_{L^1} < \infty$
follows from \eqref{fyests1} and Lemma \ref{fourierLp1}.
\end{proof}

We are now in a position to prove two results which show that the embeddings \eqref{newembedding1} in Theorem
\ref{alphaembedding} are optimal, in most cases. This is a consequence of the following Propositions \ref{counterp1} and \ref{counterp2}.

\begin{prop}\label{counterp1}
If $p,q\in [1,\infty]$, $0\le \alpha _1\le \alpha _2 \le 1$ and $t,s \in \ro$ then
\begin{equation}\nonumber
M^{p,q}_{\alpha _1,s} \subseteq M^{p,q}_{\alpha _2,t} \quad \Longrightarrow \quad t \leq s+d(\alpha _2-\alpha _1)\Big ( \frac 1q-\frac 1{p'}\Big ).
\end{equation}
\end{prop}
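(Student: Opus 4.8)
The plan is as follows. By the standing convention the inclusion $M^{p,q}_{\alpha_1,s}\subseteq M^{p,q}_{\alpha_2,t}$ is continuous, so there is $C>0$ with $\nm f{M^{p,q}_{\alpha_2,t}}\le C\nm f{M^{p,q}_{\alpha_1,s}}$ for all $f\in\mathscr S(\rr d)$. I would test this inequality on a one-parameter family of Schwartz functions $f_N$, $N\ge1$, sharply localized in frequency in a single cell of an $\alpha_2$-covering at frequency scale $N$, and then let $N\to\infty$. Concretely, fix $\chi\in C_c^\infty(\rr d)$ with $\chi\not\equiv0$ and $\supp\chi\subseteq B(0,\rho)$ for a suitably small $\rho\in(0,1)$, pick $\xi_N\in\rr d$ with $|\xi_N|=N$, and set $\wh{f_N}(\xi)=\chi(\eabs{\xi_N}^{-\alpha_2}(\xi-\xi_N))$. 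Then $\supp\wh{f_N}\subseteq B(\xi_N,\rho\eabs{\xi_N}^{\alpha_2})$, a change of variables gives $f_N(x)=\eabs{\xi_N}^{d\alpha_2}e^{i\xi_N\cdot x}(\mathscr F^{-1}\chi)(\eabs{\xi_N}^{\alpha_2}x)$, hence $\nm{f_N}{L^p}=\eabs{\xi_N}^{d\alpha_2/p'}\nm{\mathscr F^{-1}\chi}{L^p}\asymp N^{d\alpha_2/p'}$ for every $p\in[1,\infty]$, and $\eabs{\xi_N}^{\alpha_2|\beta|}\nm{\partial^\beta\wh{f_N}}{L^\infty}=\nm{\partial^\beta\chi}{L^\infty}$ for every multiindex $\beta$.

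\emph{Lower bound in the $\alpha_2$-norm.} I would show $\nm{f_N}{M^{p,q}_{\alpha_2,t}}\gtrsim N^{t+d\alpha_2/p'}$. Let $\{\psi_i\}_{i\in I}$ be an $\alpha_2$-BAPU with centres $\xi_i\in\supp\psi_i$. Since $\supp\wh{f_N}$ is a ball of radius $\asymp\eabs{\xi_N}^{\alpha_2}$ and the cells of an $\alpha_2$-covering have comparable sizes and bounded overlap, the set $\Theta_N$ of $i$ with $\psi_i\wh{f_N}\not\equiv0$ has cardinality bounded independently of $N$, and $\eabs{\xi_i}\asymp\eabs{\xi_N}\asymp N$ for $i\in\Theta_N$. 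From $f_N=\sum_{i\in\Theta_N}\psi_i(D)f_N$ the triangle inequality gives $\nm{f_N}{L^p}\lesssim\max_{i\in\Theta_N}\nm{\psi_i(D)f_N}{L^p}$, so choosing $i^*\in\Theta_N$ attaining the maximum,
\begin{equation}\nonumber
\nm{f_N}{M^{p,q}_{\alpha_2,t}}\ge\eabs{\xi_{i^*}}^{t}\nm{\psi_{i^*}(D)f_N}{L^p}\gtrsim N^t\nm{f_N}{L^p}\asymp N^{t+d\alpha_2/p'}.
\end{equation}
(Alternatively, Lemma \ref{extfamBi} produces an $\alpha_2$-BAPU containing a function equal to $1$ on $\supp\wh{f_N}$, which yields this at once.)

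\emph{Upper bound in the $\alpha_1$-norm.} Here I would use an $\alpha_1$-covering $\{B_j\}_{j\in I_1}$ and $\alpha_1$-BAPU $\{\varphi_j\}_{j\in I_1}$ as in Proposition \ref{metricpartition}, so that $\supp\varphi_j\subseteq B(\eta_j,r\eabs{\eta_j}^{\alpha_1})$ and $\eabs{\eta_j}^{\alpha_1|\beta|}\nm{\partial^\beta\varphi_j}{L^\infty}\lesssim1$ uniformly in $j$ and $\beta$. Put $\Omega_N=\{j:\varphi_j\wh{f_N}\not\equiv0\}$; then $\varphi_j(D)f_N=0$ for $j\notin\Omega_N$. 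Regarding $\supp\wh{f_N}$ as contained in one cell of a suitable $\alpha_2$-covering (Lemma \ref{extfamBi}), the estimate \eqref{sizesubset1} of Lemma \ref{countinglemma} gives $|\Omega_N|\lesssim\eabs{\xi_N}^{d(\alpha_2-\alpha_1)}\asymp N^{d(\alpha_2-\alpha_1)}$ and $\eabs{\eta_j}\asymp N$ for $j\in\Omega_N$. By Leibniz's rule, together with $\alpha_1\le\alpha_2$ and $\eabs{\eta_j}\asymp\eabs{\xi_N}$, the functions $\varphi_j\wh{f_N}$ satisfy $\eabs{\eta_j}^{\alpha_1|\beta|}\nm{\partial^\beta(\varphi_j\wh{f_N})}{L^\infty}\lesssim1$ uniformly in $j\in\Omega_N$ and $\beta$, and are supported in $B(\eta_j,r\eabs{\eta_j}^{\alpha_1})$. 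Hence Lemma \ref{fourierLp1}, applied to the family $\{\varphi_j\wh{f_N}\}_{j\in\Omega_N}$, yields $\nm{\varphi_j(D)f_N}{L^p}=\nm{\mathscr F(\varphi_j\wh{f_N})}{L^p}\lesssim\eabs{\eta_j}^{d\alpha_1/p'}\asymp N^{d\alpha_1/p'}$ uniformly in $j\in\Omega_N$. Summing (with $\sup$ in place of the $\ell^q$-sum when $q=\infty$),
\begin{equation}\nonumber
\nm{f_N}{M^{p,q}_{\alpha_1,s}}=\Big(\sum_{j\in\Omega_N}\eabs{\eta_j}^{sq}\nm{\varphi_j(D)f_N}{L^p}^q\Big)^{1/q}\lesssim|\Omega_N|^{1/q}N^{s+d\alpha_1/p'}\lesssim N^{s+d\alpha_1/p'+d(\alpha_2-\alpha_1)/q}.
\end{equation}

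\emph{Conclusion.} Inserting the last two displays into the embedding inequality gives $N^{t+d\alpha_2/p'}\lesssim N^{s+d\alpha_1/p'+d(\alpha_2-\alpha_1)/q}$ for all large $N$; letting $N\to\infty$ forces $t+d\alpha_2/p'\le s+d\alpha_1/p'+d(\alpha_2-\alpha_1)/q$, that is, $t\le s+d(\alpha_2-\alpha_1)(1/q-1/p')$, which is the assertion. I expect the only step requiring genuine care to be the uniform-in-$j$ estimate $\nm{\varphi_j(D)f_N}{L^p}\lesssim N^{d\alpha_1/p'}$: the point is that each $\varphi_j\wh{f_N}$ is a symbol-type bump adapted to a ball of radius $\asymp N^{\alpha_1}$, which is exactly the setting of Lemma \ref{fourierLp1}; once this and the count $|\Omega_N|\lesssim N^{d(\alpha_2-\alpha_1)}$ are in hand, the remainder is bookkeeping of the scaling exponents.
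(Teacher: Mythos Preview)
Your argument is correct and follows the same strategy as the paper's: test the continuous embedding on Schwartz functions whose Fourier transforms are bumps scaled to fit a single $\alpha_2$-cell, use Lemma~\ref{countinglemma} to count the $\alpha_1$-cells meeting the support, and use Lemma~\ref{fourierLp1} for uniform $L^p$ bounds on the pieces. The paper, however, works with a finite sum $\widehat f=\sum_{i\in I'}t_i\vartheta_i$ of such bumps with carefully chosen coefficients $t_i$ and lets $|I'|\to\infty$, whereas you use a single bump centred at $\xi_N$ and let $N=|\xi_N|\to\infty$; since only one frequency scale is in play at a time, your version avoids the sequence bookkeeping and is a clean simplification of the same idea. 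A second minor difference is that the paper estimates $\nm{\mathscr F^{-1}(\varphi_j\vartheta_i)}{L^p}$ via Young's inequality as $\nm{\mathscr F^{-1}\vartheta_i}{L^1}\nm{\mathscr F^{-1}\varphi_j}{L^p}$ and then invokes Lemma~\ref{fourierLp1} on the $\alpha_1$-BAPU alone, while you apply Lemma~\ref{fourierLp1} directly to the product $\varphi_j\widehat{f_N}$ after checking the symbol estimates \eqref{metricderiv2} for it via Leibniz and $\alpha_1\le\alpha_2$; both routes yield the same bound $\nm{\varphi_j(D)f_N}{L^p}\lesssim \eabs{\eta_j}^{d\alpha_1/p'}$.
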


\begin{proof}
We prove the result by showing that the assumption
$$
\ep := t - s - d(\alpha _2-\alpha _1)(1/q-1/{p'}) > 0
$$
implies that
\begin{equation}\label{contradiction2}
M^{p,q}_{\alpha _1,s} \subseteq M^{p,q}_{\alpha _2,t}
\end{equation}
cannot hold.

Let $\{ \fy _j \}_{j \in J}$ be an $\alpha_1$-BAPU constructed according to Proposition \ref{metricpartition},
and let $\{ \psi _i \}$ be an $\alpha_2$-BAPU constructed according to Proposition \ref{metricpartition} and modified according to Lemma \ref{extfamBi}. Then there exists an infinite index set $I$ such that the following is true for some $r>0$:

\begin{enumerate}
\item[{\rm{(i)}}] If $i_1,i_2\in I$ and $i_1\neq i_2$, then $\supp \psi _{i_1}\cap \supp \psi _{i_2}=\emptyset$;

\vrum

\item[{\rm{(ii)}}] $\psi _i(\xi )=1$ on $B_i=B(\xi _i, r \eabs {\xi _i}^{\alpha _2} )$, $\xi_i \in \rr d$, $i\in I$.
\end{enumerate}

Let $\vartheta \in C_c^\infty (\rr d)$ satisfy $0\le \vartheta \le 1$, $\supp \vartheta \subseteq B(0,r)$ and
$\vartheta (\xi )=1$ when $\xi \in B(0,r/2)$, and define $\vartheta _i (\xi)=\vartheta (\eabs {\xi _i}^{-\alpha _2}(\xi -\xi _i ))$.
Then $\psi _i=1$ in $\supp \vartheta _i$.
Let $I' \subseteq I$ be any finite subset, let $\{t_i\} _{i\in I'}$ be a sequence of nonnegative numbers, and set
$$
\widehat f(\xi ) =\sum _{i\in I'} t_i \vartheta _i(\xi ) \in C_c^\infty(\rr d).
$$
Let $q<\infty$.
It follows from our choice of $\vartheta _i$ that
\begin{equation}\label{alpha2normest1}
\begin{aligned}
\nm f{M^{p,q}_{\alpha _2,t}} & \ge \Big (\sum _{i\in I'} \big ( \eabs {\xi _i}^{t}
\nm {\psi _i(D)f}{L^p}\big )^q\Big )^{1/q}
\\
& = \Big (\sum _{i\in I'} \big ( \eabs {\xi _i}^{t}t_i\, \nm {\widehat \vartheta _i}{L^p}\big )^q\Big )^{1/q}
\asymp  \Big (\sum _{i\in I'} \big ( t_i\eabs {\xi _i}^{t+d\alpha _2/p'}\big )^q\Big )^{1/q}.
\end{aligned}
\end{equation}
Next we estimate $\nm f{M^{p,q}_{\alpha _1,s}}$.
Set
\begin{equation}\nonumber
\begin{aligned}
J_i & = \sets {j\in J}{\supp \fy _j\cap B_i\neq \emptyset}, \quad i \in I', \\
I'_j & = \sets {i \in I'}{\supp \fy _j\cap B_i\neq \emptyset}, \quad j \in J.
\end{aligned}
\end{equation}
By Lemma \ref{countinglemma},
\begin{equation}\nonumber
\begin{aligned}
|J_i| & \lesssim \eabs {\xi _i}^{d(\alpha _2-\alpha _1)}, \quad & i \in I', \\
|I'_j| & \lesssim 1, \quad & j \in J.
\end{aligned}
\end{equation}
Denoting the center of the ball in which $\varphi_j$ is supported by $\eta_j \in \rr d$,
this gives, using H\"older's and Young's inequalities, Lemma \ref{countinglemma} and Lemma \ref{fourierLp1},
\begin{equation}\label{alpha1normest1}
\begin{aligned}
\nm f{M^{p,q}_{\alpha _1,s}} &
= \left(  \sum_{j \in J} \eabs{\eta_j}^{sq} \left\| \sum_{i\in I_j'} t_i \mathscr F^{-1} \left( \varphi_j \vartheta _i \right) \right\|_{L^p}^q \right)^{1/q} \\
& \lesssim \left(  \sum_{j \in J} \eabs{\eta_j}^{sq} \sum_{i\in I_j'} t_i^q \| \mathscr F^{-1} \left( \varphi_j \vartheta _i \right) \|_{L^p}^q \right)^{1/q} \\
& \lesssim \left(  \sum_{i\in I'} \sum_{j \in J_i} \eabs{\eta_j}^{sq}  t_i^q \| \mathscr F^{-1} \vartheta _i \|_{L^1}^q \| \mathscr F^{-1} \varphi_j \|_{L^p}^q \right)^{1/q} \\
& \lesssim \left(  \sum_{i\in I'} \sum_{j \in J_i} \eabs{\eta_j}^{sq}  t_i^q \| \mathscr F^{-1} \varphi_j \|_{L^p}^q \right)^{1/q} \\
& \lesssim \left(  \sum_{i\in I'} \sum_{j \in J_i} \eabs{\xi_i}^{sq + d \alpha_1 q/p'} t_i^q \right)^{1/q} \\
& \lesssim \left(  \sum_{i\in I'} \left( t_i \eabs{\xi_i}^{s + d(\alpha _2-\alpha _1)/q + d \alpha_1 /p'} \right)^q  \right)^{1/q}.
\end{aligned}
\end{equation}
We may assume that $I = \noo$.
Since $|\xi_i| \rightarrow \infty$ as $i \rightarrow \infty$, we may assume that
$\eabs{\xi_i} \geq \eabs{i}^{\frac{2}{\ep q}}$, by passing to a subsequence if necessary.
If we set
$$
t_i := \eabs{i}^{-\frac{2}{q}} \eabs{\xi_i}^{- s - d(\alpha_2-\alpha_1)/q - d \alpha_1/p'}
$$
then \eqref{alpha2normest1} and \eqref{alpha1normest1} give a contradiction to \eqref{contradiction2}, as $|I'|$ is made arbitrarily large. This proves the result when $q<\infty$.
The case $q=\infty$ is settled with slight modifications of the same proof.
\end{proof}

\begin{prop}\label{counterp2}
If $p,q\in [1,\infty]$, $0\le \alpha _1\le \alpha _2 \le 1$ and $t,s \in \ro$ then
\begin{equation}\nonumber
M^{p,q}_{\alpha _1,s} \subseteq M^{p,q}_{\alpha _2,t} \quad \Longrightarrow \quad t \leq s.
\end{equation}
\end{prop}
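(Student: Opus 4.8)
The plan is to prove Proposition \ref{counterp2} by a test-function argument parallel to the proof of Proposition \ref{counterp1}, but now choosing the test function so that its Fourier transform is concentrated near a \emph{single} large frequency rather than spread over an increasing number of balls $\vartheta_i$. Assume, for contradiction, that $\ep := t - s > 0$ and that $M^{p,q}_{\alpha_1,s} \subseteq M^{p,q}_{\alpha_2,t}$. Take the $\alpha_1$-BAPU $\{\varphi_j\}_{j \in J}$ from Proposition \ref{metricpartition} and the $\alpha_2$-BAPU $\{\psi_i\}_{i \in I}$ modified by Lemma \ref{extfamBi}, so that for some infinite index set $I$ the supports of the $\psi_i$, $i \in I$, are pairwise disjoint and $\psi_i = 1$ on a ball $B_i = B(\xi_i, r\eabs{\xi_i}^{\alpha_2})$. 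Pick a fixed bump $\vartheta \in C_c^\infty(\rr d)$ with $0 \le \vartheta \le 1$, $\supp \vartheta \subseteq B(0,r)$, $\vartheta = 1$ on $B(0,r/2)$, and for a single index $i \in I$ set $\wh f(\xi) = \vartheta_i(\xi) = \vartheta(\eabs{\xi_i}^{-\alpha_2}(\xi - \xi_i))$.

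Next I would estimate both sides. From below, exactly as in \eqref{alpha2normest1} (with the single term $i$), we get $\nm{f}{M^{p,q}_{\alpha_2,t}} \gtrsim \eabs{\xi_i}^{t} \nm{\wh\vartheta_i}{L^p} \asymp \eabs{\xi_i}^{t + d\alpha_2/p'}$. For the upper bound I would reuse the chain of inequalities in \eqref{alpha1normest1}: the support of $\vartheta_i$ meets $\varphi_j$ for at most $\lesssim \eabs{\xi_i}^{d(\alpha_2-\alpha_1)}$ indices $j$ (Lemma \ref{countinglemma}), on those $j$ we have $\eabs{\eta_j} \asymp \eabs{\xi_i}$, and by Young together with Lemma \ref{fourierLp1} we have $\nm{\mathscr F^{-1}(\varphi_j \vartheta_i)}{L^p} \lesssim \nm{\mathscr F^{-1}\vartheta_i}{L^1}\nm{\mathscr F^{-1}\varphi_j}{L^p} \lesssim \eabs{\xi_i}^{d\alpha_1/p'}$. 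This gives (taking the $\ell^q$ sum over $j \in J_i$, which has $\lesssim \eabs{\xi_i}^{d(\alpha_2-\alpha_1)}$ terms) $\nm{f}{M^{p,q}_{\alpha_1,s}} \lesssim \eabs{\xi_i}^{s + d(\alpha_2-\alpha_1)/q + d\alpha_1/p'}$. Combining, the assumed embedding forces $\eabs{\xi_i}^{t + d\alpha_2/p'} \lesssim \eabs{\xi_i}^{s + d(\alpha_2-\alpha_1)/q + d\alpha_1/p'}$ uniformly in $i \in I$, hence, since $\eabs{\xi_i} \to \infty$, $t + d\alpha_2/p' \le s + d(\alpha_2-\alpha_1)/q + d\alpha_1/p'$, i.e. $t \le s + d(\alpha_2-\alpha_1)(1/q - 1/p')$.

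At this point combining with Proposition \ref{counterp1} is not yet the claimed conclusion $t \le s$; the sharper bound comes from a better upper estimate of $\nm{f}{M^{p,q}_{\alpha_1,s}}$. The point is that for the \emph{single}-frequency test function, rather than summing the $L^p$-norms crudely one should observe that $\sum_{j \in J_i} \varphi_j \wh f = \wh f$ on the relevant region, and that these $\varphi_j(D)f$ have almost-disjoint frequency supports, so one may instead use an $\ell^q$-over-$\ell^p$ bound more efficiently; concretely, since all the $\eabs{\eta_j} \asymp \eabs{\xi_i}$ one has $\nm{f}{M^{p,q}_{\alpha_1,s}} \asymp \eabs{\xi_i}^{s}\big(\sum_{j \in J_i}\nm{\varphi_j(D)f}{L^p}^q\big)^{1/q}$, and the key is to bound $\big(\sum_j \nm{\varphi_j(D)f}{L^p}^q\big)^{1/q}$ by $\nm{\wh f}{L^p}$ up to the correct power of $\eabs{\xi_i}$. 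Using that $f = \mathscr F^{-1}\vartheta_i$ with $\nm{f}{L^p} \asymp \eabs{\xi_i}^{d\alpha_2/p'}$, a Bernstein/Nikolskii-type inequality on each ball of size $\eabs{\xi_i}^{\alpha_1}$ and counting shows this quantity is $\lesssim \eabs{\xi_i}^{d\alpha_2/p'}$, i.e. the exponent improves from $s + d(\alpha_2-\alpha_1)/q + d\alpha_1/p'$ to $s + d\alpha_2/p'$. Feeding this back gives $\eabs{\xi_i}^{t + d\alpha_2/p'} \lesssim \eabs{\xi_i}^{s + d\alpha_2/p'}$ uniformly, hence $t \le s$, as desired.

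The main obstacle I anticipate is getting the sharp upper bound $\nm{f}{M^{p,q}_{\alpha_1,s}} \lesssim \eabs{\xi_i}^{s + d\alpha_2/p'}$ rather than the lossy one from a term-by-term triangle inequality: one must exploit that the pieces $\varphi_j(D)f$ ($j \in J_i$) reassemble $f$ and have boundedly overlapping frequency supports, so their $\ell^q$-combination is controlled by $\nm{f}{L^p}$ times a bounded-overlap constant, not by $(|J_i|)^{1/q}$ times the largest piece. For $q \ge p$ this is essentially an interpolation between the $\ell^\infty$ bound (each $\nm{\varphi_j(D)f}{L^p} \lesssim \nm{f}{L^p}$, using $\nm{\mathscr F^{-1}\varphi_j}{L^1} \lesssim 1$) and an $\ell^p$ bound $\sum_j \nm{\varphi_j(D)f}{L^p}^p \lesssim \nm{f}{L^p}^p$ coming from Hölder on each ball of Lebesgue measure $\asymp \eabs{\xi_i}^{d\alpha_1}$ and finite overlap; for $q < p$ one uses $\ell^q \subseteq \ell^p$ in the other direction with a compensating volume factor. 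Once this counting is in place the rest is the same routine estimate as in \eqref{alpha2normest1} and \eqref{alpha1normest1}, and the case $q = \infty$ follows with the obvious modifications.
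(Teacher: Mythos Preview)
Your approach has a genuine gap: the dilated bump $\vartheta_i(\xi)=\vartheta(\eabs{\xi_i}^{-\alpha_2}(\xi-\xi_i))$ cannot give the conclusion $t\le s$ in the range $q<p'$, because the ``improved'' upper bound $\nm f{M^{p,q}_{\alpha_1,s}}\lesssim\eabs{\xi_i}^{s+d\alpha_2/p'}$ that you claim is simply false there. Indeed, for the $\asymp\eabs{\xi_i}^{d(\alpha_2-\alpha_1)}$ indices $j$ whose support lies inside $\{\vartheta_i=1\}$ one has $\fy_j\vartheta_i=\fy_j$, hence $\nm{\fy_j(D)f}{L^p}=\nm{\mathscr F^{-1}\fy_j}{L^p}\asymp\eabs{\xi_i}^{d\alpha_1/p'}$ by Lemma~\ref{fourierLp1}. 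Thus
\[
\Big(\sum_{j\in J_i}\nm{\fy_j(D)f}{L^p}^q\Big)^{1/q}\;\asymp\;\eabs{\xi_i}^{d(\alpha_2-\alpha_1)/q+d\alpha_1/p'},
\]
so $\nm f{M^{p,q}_{\alpha_1,s}}\asymp\eabs{\xi_i}^{s+d(\alpha_2-\alpha_1)/q+d\alpha_1/p'}$ is a \emph{two-sided} estimate, and the ratio $\nm f{M^{p,q}_{\alpha_2,t}}/\nm f{M^{p,q}_{\alpha_1,s}}$ is exactly $\asymp\eabs{\xi_i}^{(t-s)+d(\alpha_2-\alpha_1)(1/p'-1/q)}$. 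This test function therefore reproduces Proposition~\ref{counterp1} and nothing more; no reorganisation of the $\ell^q$-sum can beat a matching lower bound. (Your $\ell^p$-bound $\sum_j\nm{\fy_j(D)f}{L^p}^p\lesssim\nm f{L^p}^p$ from ``H\"older on each ball and finite overlap'' is an estimate on the Fourier side and does not transfer to $L^p$ for $p\neq 2$.)

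The paper's remedy is to change the test function, not the estimate: take the \emph{undilated} bump $\vartheta_i(\xi)=\vartheta(\xi-\xi_i)$ of fixed radius~$r$. Then $\supp\vartheta_i\subseteq B(\xi_i,r)\subseteq B_i$, so the $\alpha_2$-side still gives $\nm f{M^{p,q}_{\alpha_2,t}}\gtrsim\eabs{\xi_i}^t$ (now with $\nm{\mathscr F^{-1}\vartheta_i}{L^p}$ a constant independent of~$i$). On the $\alpha_1$-side the crucial gain is that a ball of fixed radius $r$ meets only $O(1)$ of the $\alpha_1$-covering sets (each has radius $\asymp\eabs{\xi_i}^{\alpha_1}\ge 1$), so $|J_i|\lesssim 1$ and the lossy factor $\eabs{\xi_i}^{d(\alpha_2-\alpha_1)/q}$ never appears: one gets $\nm f{M^{p,q}_{\alpha_1,s}}\lesssim\eabs{\xi_i}^s$ directly, and the contradiction with $t>s$ follows.
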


\begin{proof}
We show that $t > s$ implies that \eqref{contradiction2}
does not hold.

Let $\{ \fy _j \}_{j \in J}$, $\{ \psi _i \}$ and $I$ be as in the proof of Proposition \ref{counterp1} and let $\vartheta _i=\vartheta (\xi -\xi _i )\in C_c^\infty (\rr d)$,
where $\vartheta \in C_c^\infty (\rr d)$, $\supp \vartheta \subseteq B(0,r)$ is the same as in the proof of Proposition \ref{counterp1}. Let $f$ be given by
$$
\widehat f(\xi ) =\sum _{i\in I'} t_i \vartheta _i(\xi ) \in C_c^\infty(\rr d)
$$
for some suitable sequence $\{ t_i \}_{i \in I'}$ where $I' \subseteq I$ is finite. Let $q<\infty$.
We have
\begin{multline}\label{alpha2normest2}
\nm f{M^{p,q}_{\alpha _2,t}} \ge \Big (\sum _{i\in I'} \big ( \eabs {\xi _i}^{t}
\nm {\psi _i(D)f}{L^p}\big )^q\Big )^{1/q}
\\[1ex]
= \Big (\sum _{i\in I'} \big ( \eabs {\xi _i}^{t}t_i\, \nm {\widehat \vartheta _i}{L^p}\big )^q\Big )^{1/q}
\asymp  \Big (\sum _{i\in I'} \big ( t_i\eabs {\xi _i}^{t}\big )^q\Big )^{1/q}.
\end{multline}
In order to estimate $\nm f{M^{p,q}_{\alpha _1,s}}$ we set
\begin{equation}\nonumber
\begin{aligned}
J_i & = \sets {j\in J}{\supp \fy _j\cap B(\xi_i,r) \neq \emptyset}, \quad i \in I', \\
I'_j & = \sets {i \in I'}{\supp \fy _j\cap B(\xi_i,r) \neq \emptyset}, \quad j \in J.
\end{aligned}
\end{equation}
As in the proof of Lemma \ref{countinglemma} it follows that
$$
\sup_{i \in I'} | J_i | < \infty, \quad \sup_{j \in J} | I_j' | < \infty, \quad \mbox{and} \quad
\eabs{\xi_i}  \asymp \eabs {\eta _j} \quad \mbox{when} \quad j \in J_i.
$$
As in the estimate \eqref{alpha1normest1} this gives, again using H\"older's and Young's inequalities and Lemma \ref{fourierLp1},
\begin{equation}\label{alpha1normest2}
\begin{aligned}
\nm f{M^{p,q}_{\alpha _1,s}} &
= \left(  \sum_{j \in J} \eabs{\eta_j}^{sq} \left\| \sum_{i\in I_j'} t_i \mathscr F^{-1} \left( \varphi_j \vartheta _i \right) \right\|_{L^p}^q \right)^{1/q} \\
& \lesssim \left(  \sum_{j \in J} \eabs{\eta_j}^{sq} \sum_{i\in I_j'} t_i^q \| \mathscr F^{-1} \left( \varphi_j \vartheta _i \right) \|_{L^p}^q \right)^{1/q} \\
& \lesssim \left(  \sum_{i\in I'} \sum_{j \in J_i} \eabs{\xi_i}^{sq} t_i^q \| \mathscr F^{-1} \vartheta _i \|_{L^p}^q \| \mathscr F^{-1} \varphi_j \|_{L^1}^q \right)^{1/q} \\
& \lesssim \left(  \sum_{i\in I'}  \eabs{\xi_i}^{sq}  t_i^q \right)^{1/q}.
\end{aligned}
\end{equation}
As before \eqref{alpha2normest2} and \eqref{alpha1normest2} give a contradiction to \eqref{contradiction2}.
The case $q=\infty$ follows in the same manner.
\end{proof}

A combination of \eqref{indices1}, Propositions \ref{counterp1} and \ref{counterp2}, and duality give the
earlier mentioned optimality result concerning Theorem \ref{alphaembedding}.

\begin{cor}\label{sharpnessresult}
Let $p,q \in [1,\infty]$, $s \in \ro$ and $0 \leq \alpha_1 \leq \alpha_2 \leq 1$. \\
If $1/p \leq \max(1/2,1/q)$ then
\begin{equation}\nonumber
M^{p,q}_{\alpha _1,s} \subseteq M^{p,q}_{\alpha _2,t} \quad \Longrightarrow \quad t \leq s + d(\alpha_2-\alpha_1) \theta_2(p,q).
\end{equation}
If $1/p \geq \min(1/2,1/q)$ then
\begin{equation}\nonumber
M^{p,q}_{\alpha _2,t} \subseteq M^{p,q}_{\alpha _1,s} \quad \Longrightarrow \quad t \geq s + d(\alpha_2-\alpha_1) \theta_1(p,q).
\end{equation}
\end{cor}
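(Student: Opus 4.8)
The plan is to derive Corollary~\ref{sharpnessresult} from the two Propositions \ref{counterp1} and \ref{counterp2} together with duality, by carefully bookkeeping which of the two upper bounds on $t-s$ is the binding one depending on the position of $1/p$ relative to $1/2$ and $1/q$. The starting point is that Proposition~\ref{counterp1} gives, whenever $M^{p,q}_{\alpha_1,s}\subseteq M^{p,q}_{\alpha_2,t}$,
\begin{equation}\nonumber
t\le s+d(\alpha_2-\alpha_1)\Big(\frac1q-\frac1{p'}\Big),
\end{equation}
while Proposition~\ref{counterp2} gives $t\le s$, i.e. $t\le s+d(\alpha_2-\alpha_1)\cdot 0$. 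Since $\alpha_2-\alpha_1\ge 0$, combining the two yields
\begin{equation}\nonumber
t\le s+d(\alpha_2-\alpha_1)\min\Big(0,\ \frac1q-\frac1{p'}\Big).
\end{equation}

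First I would handle the upper-embedding statement. The issue is that $\theta_2(p,q)=\min(0,q^{-1}-\max(p^{-1},p'^{-1}))$, whereas the combination above only produces $\min(0,q^{-1}-p'^{-1})$. These agree precisely when $\max(p^{-1},p'^{-1})=p'^{-1}$, i.e. when $1/p'\ge 1/p$, i.e. when $p\ge 2$. To cover the remaining range one also needs the bound coming from the ``other'' index. Here I would invoke a second application of Proposition~\ref{counterp1}, but now reading off the estimate with the roles adjusted: the proof of Proposition~\ref{counterp1} is symmetric enough that, running the test-function argument with the $L^p$-norm estimated via the exponent $p$ rather than $p'$ in the Hausdorff--Young/Young step (equivalently, applying the proposition to the dual embedding and dualizing back), one obtains also $t\le s+d(\alpha_2-\alpha_1)(q^{-1}-p^{-1})$. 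Taking the minimum of all three available bounds $0$, $q^{-1}-p^{-1}$, $q^{-1}-p'^{-1}$ gives exactly $\min(0,q^{-1}-\max(p^{-1},p'^{-1}))=\theta_2(p,q)$, but this full strength is only legitimate when $1/p\le\max(1/2,1/q)$ — the hypothesis of the corollary — since the argument producing the $q^{-1}-p^{-1}$ bound (the one involving $\|\mathscr F^{-1}\vartheta_i\|_{L^1}$ controlled by the support size) requires this restriction to make the Young estimate efficient. In the complementary regime $1/p>\max(1/2,1/q)$ one genuinely cannot produce the sharper bound by these test functions, which is why the corollary is stated with the indicated hypothesis.

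The second, lower-embedding statement is then obtained purely by duality. By Proposition~\ref{alphaproperties}(ii), the dual of $M^{p,q}_{\alpha,r}$ is (identified with) $M^{p',q'}_{\alpha,-r}$, so $M^{p,q}_{\alpha_2,t}\subseteq M^{p,q}_{\alpha_1,s}$ is equivalent to $M^{p',q'}_{\alpha_1,-s}\subseteq M^{p',q'}_{\alpha_2,-t}$. Applying the already-established first part of the corollary with $(p,q,s,t)$ replaced by $(p',q',-s,-t)$ gives $-t\le -s+d(\alpha_2-\alpha_1)\theta_2(p',q')$, i.e. $t\ge s-d(\alpha_2-\alpha_1)\theta_2(p',q')=s+d(\alpha_2-\alpha_1)\theta_1(p,q)$ by the identity $\theta_2(p',q')=-\theta_1(p,q)$ from \eqref{indices1}. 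The hypothesis $1/p'\le\max(1/2,1/q')$ translates into $1/p\ge\min(1/2,1/q)$, which is exactly the stated hypothesis for the lower-embedding implication. I expect the main obstacle to be the bookkeeping in the first paragraph: one must be careful that running Proposition~\ref{counterp1} in ``both directions'' really does yield the two distinct bounds $q^{-1}-p^{-1}$ and $q^{-1}-p'^{-1}$ under the respective restrictions on $1/p$, and that their minimum with $0$ collapses correctly to $\theta_2(p,q)$ over the full hypothesis region — the $q=\infty$ and $p\in\{1,\infty\}$ boundary cases need a quick separate check but follow the same pattern.
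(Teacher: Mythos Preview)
Your overall strategy---combine Propositions~\ref{counterp1} and~\ref{counterp2} for the first implication, then dualize for the second---is the paper's approach. But your case analysis for the first implication contains a spurious and unjustified step.

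You correctly note that Propositions~\ref{counterp1} and~\ref{counterp2} together yield
\[
t\le s+d(\alpha_2-\alpha_1)\min\Big(0,\ \frac1q-\frac1{p'}\Big),
\]
and that this matches $\theta_2(p,q)$ when $p\ge 2$. For $p<2$ you then invoke a ``second application'' of Proposition~\ref{counterp1} to produce the bound $t\le s+d(\alpha_2-\alpha_1)(q^{-1}-p^{-1})$. This does not work: the exponent $p'$ in Proposition~\ref{counterp1} comes from the scaling in Lemma~\ref{fourierLp1}, namely $\|\mathscr F\vartheta_i\|_{L^p}\asymp\eabs{\xi_i}^{d\alpha_2/p'}$, and cannot be swapped for $p$. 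Nor does dualizing help, since the dual of $M^{p,q}_{\alpha_1,s}\subseteq M^{p,q}_{\alpha_2,t}$ is an embedding from $\alpha_2$ into $\alpha_1$, to which neither proposition applies.

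Fortunately this extra bound is not needed. The hypothesis $1/p\le\max(1/2,1/q)$ says precisely that either $p\ge 2$ or $1/p\le 1/q$. In the second case $p<2$, so $\max(p^{-1},p'^{-1})=p^{-1}$, and $q^{-1}-p^{-1}\ge 0$ forces $\theta_2(p,q)=0$; then Proposition~\ref{counterp2} alone already gives $t\le s=s+d(\alpha_2-\alpha_1)\theta_2(p,q)$. Thus the case split is simply: $p\ge 2$ uses both propositions, $p<2$ (under the hypothesis) uses only Proposition~\ref{counterp2}. Your duality argument for the second implication is correct as written.
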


\subsection*{Acknowledgements}

Part of the content of this article was presented at the
International Conference on Partial Differential Equations and Applications,
Sofia, Bulgaria, September 14--17, 2011, in honor of Professor Petar Popivanov on the occasion of his 65th anniversary.
The second named author thanks the organizers of the conference
Georgi Boyadjiev, Todor Gramchev, Nikolay Kutev and Alessandro Oliaro,
for the interesting conference, and for the invitation to give a talk.

Furthermore, an early version of the content was also presented at the conference
From Abstract to Computational Harmonic Analysis, June 13--19, 2011, Strobl, Austria.
We thank the organizers Karlheinz Gr\"ochenig and Thomas Strohmer, as well as Hans G. Feichtinger, for the opportunity to present the paper and for the stimulating conference.


\end{document}